\theoremstyle{definition}
\author{Zbigniew Burdak, Marek Kosiek, Patryk Pagacz, Marek S{\l}oci{\'n}ski}
\title{Invariant subspaces of $\h^2(\T^2)$ and $L^2(\T^2)$ preserving compatibility}
\address{Zbigniew Burdak, Department of Applied Mathematics, University of
Agriculture, ul. Balicka 253c, 30-198 Krak\' ow,
Poland.}
\email{rmburdak@cyf-kr.edu.pl}
\address{Marek Kosiek, Wydzia\l{} Matematyki i Informatyki,
Uniwersytet Jagiello\'nski, ul. Prof. St. \L{}ojasiewicza 6, 30-348 Krak\'ow, Poland
}\email{Marek.Kosiek@im.uj.edu.pl}
\address{Patryk Pagacz, Wydzia\l{} Matematyki i Informatyki,
Uniwersytet Jagiello\'nski, ul. Prof. St. \L{}ojasiewicza 6, 30-348 Krak\'ow, Poland
 }\email{Patryk.Pagacz@im.uj.edu.pl}
\address{Marek S\l{}oci\'nski, Wydzia\l{} Matematyki i Informatyki,
Uniwersytet Jagiello\'nski, ul. Prof. St. \L{}ojasiewicza 6, 30-348 Krak\'ow, Poland
 }\email{Marek.Slocinski@im.uj.edu.pl}
\keywords{Invariant subspaces, Beurling theorem, multiplication operator over bi-disk, Hardy space}
\subjclass{Primary 47A15; Secondary 47B37}
\thanks{Research was supported by the Ministry of Science and Higher Education of the
Republic of Poland}
\newcommand{\ran}{\mathrm{ran}\,}
\newcommand{\h}{\ensuremath{\mathcal H}}
\newcommand{\M}{\ensuremath{\mathcal M}}
\newcommand{\B}{\ensuremath{\mathcal B}}
\newcommand{\Ll}{\ensuremath{\mathcal L}}
\newcommand{\K}{\ensuremath{\mathcal K}}
\newcommand{\Z}{\ensuremath{\mathbb Z}}
\newcommand{\T}{\ensuremath{\mathbb T}}
\newcommand{\Cplx}{\ensuremath{\mathbb C}}
\theoremstyle{plain}
\newtheorem{thm}{Theorem}[section]
\newtheorem{lem}[thm]{Lemma}
\newtheorem{cor}[thm]{Corollary}
\newtheorem{prop}[thm]{Proposition}
\theoremstyle{definition}
\newtheorem{Def}[thm]{Definition}
\newtheorem{rem}[thm]{Remark}
\newtheorem{ex}[thm]{Example}
\def\Arg{\operatorname{Arg}}
\def\atan{\operatorname{atan}}
\def\Span#1{\bigvee\{#1\}}
\def\C1{C_{1,\cdot}}
\begin{document}

\maketitle

\begin{abstract}
Operators of multiplication by independent variables on the space of square summable functions over the torus and its Hardy subspace are considered. Invariant subspaces where the operators are compatible are described.
\end{abstract}

\section{Introduction}

Let $\B(H)$ be an algebra of bounded linear operators on a complex Hilbert space $H$. The restriction of an operator to an invariant subspace is called a part of the operator and similarly for systems of operators. A subspace $H_0\subset H$ is reducing under an
operator  if and only if $P_{H_0}$ (the orthogonal projection onto $H_0$) commutes with the operator.  An invariant subspace which do not contain any nontrivial, reducing subspace is called purely invariant.
Recall the classical Wold's result \cite{W}.

\begin{thm}\label{W}
Let $V\in \B(H)$ be an isometry. There is a unique decomposition of $H$ into orthogonal, reducing under $V$ subspaces $H_u$, $H_s$,
such that $V|_{H_u}$ is a unitary operator    and $V|_{H_s}$ is a unilateral shift.
 Moreover,
 \begin{equation}\label{Wold}
 H_u=\bigcap_{n\geq 0}V^nH,\quad H_s=\bigoplus_{n\geq0} V^n(\ker V^*).
 \end{equation} $\hfill\Box$
\end{thm}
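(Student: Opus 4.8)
The plan is to run the classical argument organised around the \emph{wandering subspace} $L:=\ker V^*=H\ominus VH$. First I would verify that the subspaces $V^nL$, $n\ge0$, are mutually orthogonal: for $0\le n<m$ and $x,y\in L$ one computes $\scal{V^nx}{V^my}=\scal{x}{V^{m-n}y}=\scal{x}{V(V^{m-n-1}y)}=0$ since $x\perp VH$ and $V$ is an isometry. Hence I may set $H_s:=\bigoplus_{n\ge0}V^nL$ and $H_u:=H\ominus H_s$. Iterating the orthogonal splitting $H=L\oplus VH$ gives, for every $n\ge1$, $H=\big(\bigoplus_{k=0}^{n-1}V^kL\big)\oplus V^nH$, so $V^nH=H\ominus\bigoplus_{k=0}^{n-1}V^kL$; intersecting over $n$ yields $\bigcap_{n\ge0}V^nH=H\ominus H_s=H_u$. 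The same telescoping applied inside $H_s$ shows $V^nH_s=\bigoplus_{k\ge n}V^kL$, whence $\bigcap_{n\ge0}V^nH_s=\{0\}$. This already produces the formulas in \eqref{Wold} once the structural claims are checked.

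Next I would establish that $H_s$ and $H_u$ reduce $V$ and carry the asserted parts. The subspace $H_s$ is clearly $V$-invariant, and it is reducing because $V^*(V^nL)=V^{n-1}L\subset H_s$ for $n\ge1$ while $V^*L=V^*\ker V^*=\{0\}$; consequently its orthogonal complement $H_u$ reduces $V$ as well. On $H_s$ the operator $V$ is an isometry with $\bigcap_nV^nH_s=\{0\}$, i.e. a unilateral shift with wandering subspace $L$. On $H_u$ the operator $V$ is an isometry which is moreover surjective: given $x\in H_u=\bigcap_nV^nH$, for each $n$ write $x=V^{n+1}w_n$; by injectivity of $V$ the vector $V^{-1}x$ equals $V^nw_n\in V^nH$ for every $n$, so $V^{-1}x\in\bigcap_nV^nH=H_u$ and $x\in VH_u$. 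Hence $V|_{H_u}$ is unitary.

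For uniqueness, suppose $H=H_u'\oplus H_s'$ is any decomposition into reducing subspaces with $V|_{H_u'}$ unitary and $V|_{H_s'}$ a unilateral shift. If $x\in\ker V^*$, decomposing $x=x_u+x_s$ along this splitting and using that $V^*$ is injective on $H_u'$ forces $x_u=0$; thus $\ker V^*\subseteq H_s'$, and since $H_s'$ is closed and $V$-invariant, $H_s=\bigvee_nV^n(\ker V^*)\subseteq H_s'$. Conversely, any vector in the wandering subspace $H_s'\ominus VH_s'$ of the shift $V|_{H_s'}$ is orthogonal to $VH_s'$ and, lying in $H_s'\perp H_u'=VH_u'$, also to $VH_u'$, hence to $VH$; so it lies in $\ker V^*$, and therefore $H_s'=\bigoplus_nV^n(H_s'\ominus VH_s')\subseteq\bigvee_nV^n(\ker V^*)=H_s$. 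Thus $H_s'=H_s$ and $H_u'=H_u$.

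The orthogonality computation and the telescoping identities are routine; the two points I expect to need the most care are the surjectivity of $V|_{H_u}$, where one must use injectivity of $V$ to pull $x$ back into the intersection $\bigcap_nV^nH$, and the uniqueness step, whose crux is the observation that the wandering subspace of \emph{any} admissible shift part is forced to coincide with $\ker V^*$. The uniqueness argument is the part I would be most careful to write cleanly.
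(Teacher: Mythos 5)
Your argument is correct and complete: it is the standard Wold argument via the wandering subspace $\ker V^*$, with the two delicate points (surjectivity of $V|_{H_u}$ and the identification of the wandering subspace of any admissible shift part with $\ker V^*$) handled properly. The paper itself states this classical theorem without proof, merely citing Wold, so there is nothing to compare against; your proof is the standard one and fills that gap correctly.
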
text
There is no natural extension of Wold's result to a pair in general. However, it holds for doubly commuting pairs \cite{Sl}. Recall that operators $T_1,T_2 \in \B(\h)$ \emph{doubly commute} if and only if they commute and $T_1T^*_2 = T^*_2T_1$.

\begin{thm}\label{wbs}  For any pair of doubly commuting isometries $V_1,V_2$ on $H$ there is a unique decomposition
\begin{equation}H=H_{uu}\oplus H_{us}\oplus H_{su}\oplus H_{ss},\label{RR}\end{equation}
 such that
$H_{uu},\; H_{us},\; H_{su},\; H_{ss}$ reduce $V_1$ and $V_2$ and
\begin{enumerate}
\item[] $V_1|_{H_{uu}}, V_2|_{H_{uu}}$ are unitary operators,
\item[] $V_1|_{H_{us}}$ is a unitary operator, $V_2|_{H_{us}}$ is
a unilateral shift,
\item[] $V_1|_{H_{su}}$ is a unilateral shift, $V_2|_{H_{su}}$ is a unitary operator,
\item[] $V_1|_{H_{ss}},V_2|_{H_{ss}}$  are unilateral shifts.
\end{enumerate}
\end{thm}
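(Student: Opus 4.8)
The plan is to build the decomposition iteratively by applying the single-operator Wold decomposition (Theorem~\ref{W}) twice and checking that the second application respects the first.

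First I would apply Theorem~\ref{W} to $V_1$, obtaining $H = H_u^{(1)} \oplus H_s^{(1)}$ where $H_u^{(1)} = \bigcap_{n\geq 0} V_1^n H$ and $H_s^{(1)} = \bigoplus_{n\geq 0} V_1^n(\ker V_1^*)$, with $V_1$ unitary on the first summand and a unilateral shift on the second. The crucial observation is that $V_2$ \emph{reduces} both $H_u^{(1)}$ and $H_s^{(1)}$: since $V_1, V_2$ commute we get $V_2 H_u^{(1)} = V_2 \bigcap_n V_1^n H \subseteq \bigcap_n V_1^n H = H_u^{(1)}$, so $H_u^{(1)}$ is $V_2$-invariant; and since $V_1 V_2^* = V_2^* V_1$ (double commutation), $V_2^*$ maps $\ker V_1^*$ into itself and commutes with $V_1$, hence $V_2^*$ preserves $H_s^{(1)} = \bigoplus_n V_1^n(\ker V_1^*)$, which combined with $V_2$-invariance (from the orthogonal complement being invariant under $V_2$, or directly) shows $H_s^{(1)}$ reduces $V_2$. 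So $V_2$ restricts to an isometry on each of $H_u^{(1)}$ and $H_s^{(1)}$.

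Next I would apply Theorem~\ref{W} to $V_2|_{H_u^{(1)}}$ and to $V_2|_{H_s^{(1)}}$ separately, splitting $H_u^{(1)} = H_{uu} \oplus H_{us}$ and $H_s^{(1)} = H_{su} \oplus H_{ss}$ according to where $V_2$ is unitary versus a shift. This yields the four-fold orthogonal decomposition \eqref{RR} with the claimed behaviour of $V_2$ on each piece. It remains to verify that $V_1$ still reduces all four subspaces and retains its type (unitary on $H_{uu}, H_{us}$; shift on $H_{su}, H_{ss}$). For this I would run the symmetric argument: inside $H_u^{(1)}$ (where $V_1$ is unitary and $V_2$ an isometry doubly commuting with it), the Wold subspaces of $V_2$ are $\bigcap_m V_2^m H_u^{(1)}$ and $\bigoplus_m V_2^m(\ker V_2^* \cap H_u^{(1)})$; the same commutation/double-commutation bookkeeping shows $V_1$ reduces each, and since $V_1|_{H_u^{(1)}}$ is unitary, so is each of its restrictions — giving the $V_1$-unitary on $H_{uu}, H_{us}$. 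On $H_s^{(1)}$ the analogous step must show $V_1$ stays a unilateral shift on $H_{su}$ and $H_{ss}$: a restriction of a unilateral shift to a reducing subspace on which it is pure (no unitary part) is again a unilateral shift, and one checks the $V_1$-unitary part of $H_s^{(1)}$ is trivial because $H_s^{(1)} \perp H_u^{(1)}$ and $H_u^{(1)} = \bigcap_n V_1^n H$ already absorbs everything on which $V_1$ acts unitarily.

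Finally, uniqueness: suppose $H = K_{uu}\oplus K_{us}\oplus K_{su}\oplus K_{ss}$ is another such decomposition. Then $K_{uu}\oplus K_{us}$ reduces $V_1$ and $V_1$ is unitary there, while $V_1$ is a shift on $K_{su}\oplus K_{ss}$; by the uniqueness clause of Theorem~\ref{W} this forces $K_{uu}\oplus K_{us} = H_u^{(1)}$ and $K_{su}\oplus K_{ss} = H_s^{(1)}$. Applying the uniqueness clause of Theorem~\ref{W} again to $V_2$ restricted to each of these two subspaces pins down the individual summands. I expect the main obstacle to be the careful verification that the second Wold decomposition is compatible with the first — i.e. that $V_2$ genuinely reduces (not merely leaves invariant) the $V_1$-Wold subspaces, and symmetrically — which is exactly where the double-commutation hypothesis $V_1 V_2^* = V_2^* V_1$ is indispensable: with mere commutation one only gets invariance, not reduction, and the theorem fails.
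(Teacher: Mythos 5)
The paper does not prove this statement at all: it is recalled verbatim as S{\l}oci\'nski's theorem from \cite{Sl} and stated without proof. Your proposal is a correct reconstruction of the standard argument — Wold for $V_1$, then double commutation to show $V_2$ and $V_2^*$ preserve both $\bigcap_n V_1^nH$ and $\bigoplus_n V_1^n(\ker V_1^*)$ (the key inclusion being $V_2\ker V_1^*\subseteq\ker V_1^*$ via $V_1^*V_2=V_2V_1^*$), then Wold for $V_2$ on each summand, with uniqueness inherited from the one-variable case — so there is nothing in the paper to compare it against, and no gap to report.
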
$\hfill\Box$

Let $\T\subset\Cplx, \T^2\subset\Cplx^2$ denote the unit circle, the torus respectively, $L^2(\T), L^2(\T^2)$ the spaces of square summable functions with normalized Lebesgue measure and $\h^2(\T), \h^2(\T^2)$ the respective Hardy spaces. Further, $L^2(\T,H)$ denotes the space of square summable functions over $\T$ valued in the Hilbert space $H$. Recall that  $L^2(\T,H)\simeq L^2(\T)\otimes H$ and $L^2(\T^2)\simeq L^2(\T,L^2(\T))\simeq L^2(\T)\otimes L^2(\T)$.
The operators of multiplication by independent variable(s) are denoted  by $L_zf(z):=zf(z)$ on $L^2(\T)$ and $T_z:=L_z|_{\h^2(\T)}$ and $L_w, L_z, T_w, T_z$ in the case of spaces over the torus. Whenever it is considered invariant or reducing subspace of $\h^2(\cdot), L^2(\cdot)$ without indicating opertor(s) it is assumed to be reducing or invariant under respective multiplication operator(s) where $\cdot$ stands for the circle, the torus or a Hilbert space valued case. Recall that the function $\psi\in L^\infty(\T)$ is unimodular if $|\psi(z)|=1$ for almost every $z\in\T$ and similarly on the torus. By the result of Helson in \cite{Helson} any reducing subspace of $L^2(\T)$ is of the form $\chi_\delta L^2(\T)$, for some Borel set $\delta\subset\T$, while purely invariant subspace is of the form $\psi\h^2(\T)$ for $\psi$ a unimodular function. A similar result on the torus, but only for reducing subspaces  was obtained in \cite{GM}, Lemma 3. Hardy spaces do not contain nontrivial reducing subspaces. Indeed, $T_z\in\h^2(\T)$ is a model of a unilateral shift of multiplicity one which do not have reducing subspaces. The proof for the Hardy space over the torus  is given in Section 3. The invariant subspaces of $\h^2(\T)$ are described by inner functions. The function $\phi\in \h^\infty(\T)$ is called inner if $|\phi(z)|=1$ for almost every $z\in\T$.

\begin{thm}[Beurling\cite{Beur}]\label{beur}
Each invariant under $T_z\in \B(\h^2(\T))$ subspace is of the form $\phi \h^2(\T)$, where $\phi$ is an inner function.
\end{thm}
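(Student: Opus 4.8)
The plan is to recover the desired decomposition from Wold's theorem (Theorem~\ref{W}) applied to $T_z$ restricted to the invariant subspace, and then to prove that the resulting wandering subspace is at most one-dimensional.

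First I would fix a nonzero closed $T_z$-invariant subspace $M\subseteq\h^2(\T)$, so that $V:=T_z|_M$ is an isometry on $M$. I would check that $V$ has no unitary part: by \eqref{Wold} that part is $\bigcap_{n\geq0}V^nM=\bigcap_{n\geq0}z^nM\subseteq\bigcap_{n\geq0}z^n\h^2(\T)=\{0\}$, the last equality holding because a function in $z^n\h^2(\T)$ has all Fourier coefficients of index $<n$ equal to $0$ (equivalently, $T_z$ on $\h^2(\T)$ is a unilateral shift, as recalled above). Hence Theorem~\ref{W} gives $M=\bigoplus_{n\geq0}z^nW$, where $W:=M\ominus zM=\ker V^*$ is wandering and $W\neq\{0\}$ since $M\neq\{0\}$.

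The key step is to show $\dim W\leq1$. Take $f,g\in W$. Since $f,g\perp zM\supseteq z^kM$ for every $k\geq1$ and $z^kf,z^kg\in z^kM$, we get $\scal{f}{z^kg}=\scal{g}{z^kf}=0$ for all $k\geq1$. Now $f\bar g\in L^1(\T)$ has its $k$-th Fourier coefficient equal to $\scal{f}{z^kg}$ and its $(-k)$-th coefficient equal to $\overline{\scal{g}{z^kf}}$, so $f\bar g$ is a.e.\ equal to a constant $c$. Taking $f=g$ yields $|f|^2=\norm{f}^2$ a.e.\ (and similarly for $g$); hence if $\norm f=\norm g=1$, multiplying $f\bar g=c$ by $g$ and using $|g|^2=1$ gives $f=cg$. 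Therefore $W=\Cplx\phi$ for some unit vector $\phi$.

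It then remains to identify $\phi$ as inner and conclude $M=\phi\h^2(\T)$. Orthogonality of $\{z^n\phi\}_{n\geq0}$ gives $\scal{z^n\phi}{\phi}=0$ for $n\geq1$, i.e.\ all nonzero Fourier coefficients of $|\phi|^2$ vanish, so $|\phi|=1$ a.e.; together with $\phi\in\h^2(\T)$ this places $\phi$ in $\h^\infty(\T)$, so $\phi$ is inner. Finally $M=\bigoplus_{n\geq0}z^n\Cplx\phi=\Span{z^n\phi:n\geq0}$, and since multiplication by the unimodular $\phi$ is a unitary of $L^2(\T)$ carrying $\Span{z^n:n\geq0}=\h^2(\T)$ onto $\Span{z^n\phi:n\geq0}$, we obtain $M=\phi\h^2(\T)$. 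I expect the one-dimensionality of $W$ to be the only genuine obstacle; everything else is bookkeeping with Wold's theorem and Fourier coefficients.
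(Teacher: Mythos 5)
Your proof is correct. Note that the paper does not prove Theorem~\ref{beur} at all: it is quoted as a classical result with a citation to Beurling, so there is no internal proof to compare against. What you have written is the standard Halmos--Helson argument via the Wold decomposition, and every step checks out: the unitary part of $V=T_z|_M$ vanishes because $\bigcap_{n\ge 0}z^n\h^2(\T)=\{0\}$; the Fourier-coefficient computation showing $f\bar g$ is a.e.\ constant for $f,g$ in the wandering subspace $W=M\ominus zM$ correctly forces $\dim W\le 1$ and simultaneously shows the generator $\phi$ is unimodular, hence inner; and the final identification $M=\bigoplus_{n\ge 0}z^n\Cplx\phi=\phi\h^2(\T)$ is legitimate since multiplication by the unimodular $\phi$ is an isometry of $L^2(\T)$, so $\phi\h^2(\T)$ is closed. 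This is also the proof scheme the paper implicitly relies on elsewhere (e.g.\ the use of wandering subspaces and Wold-type decompositions in Sections 3 and 4), so your argument is entirely consonant with the paper's framework.
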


 By the results in \cite{Sl, Sarkar} a model of $n$ tuple of doubly commuting unilateral shifts are operators of multiplication by independent variables on the Hardy space over the polydisk $\T^n$. Note that $n$ tuple of operators doubly commute only when each pair of different operators in the $n$ tuple doubly commute. Thus in the case $n=1$ doubly commutativity is vacuously satisfied and the model describes any unilateral shift of multiplicity one. From such point of view a  generalization of Beurling theorem to $n$ tuple is that inner functions describe invariant subspaces of doubly commuting unilateral shifts where the operators preserve doubly commutativity. Such a generalization is precisely formulated and proved in \cite{SSW}. Let us only point out that it covers the classical Beurling Theorem (with its generalizations by Lax and by Halmos) as well as the following result of Mandrekar \cite{Man}.
\begin{thm}\label{Man}
Let $T_w, T_z\in \B(\h^2(\T^2))$ be multiplications by independent variables $w,z$, respectively.
Any invariant under  $T_w, T_z$ subspace $\M\not=\{0\}$ is of the form $\phi\h^2(\T^2)$, with $\phi$ being inner function if and only if $T_w, T_z$ doubly commute on $\M$.
\end{thm}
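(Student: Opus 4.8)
Below is the plan I would follow. It splits, as usual, into the trivial implication and the substantial one.

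The converse (``if'') direction is a one-line computation I would dispose of first. If $\M = \phi\h^2(\T^2)$ with $\phi$ inner, then $|\phi| = 1$ a.e., so $M_\phi$ is an isometry of $L^2(\T^2)$; its restriction to $\h^2(\T^2)$ is an isometry with closed range $\phi\h^2(\T^2) = \M$ that intertwines $(T_w, T_z)$ on $\h^2(\T^2)$ with $(T_w|_\M, T_z|_\M)$. Since $T_w, T_z$ doubly commute on $\h^2(\T^2)$ (they model a doubly commuting pair of shifts, by the very result quoted above), the intertwining unitary $M_\phi\colon\h^2(\T^2)\to\M$ carries the identity $T_wT_z^* = T_z^*T_w$ over to $\M$, so the restricted pair doubly commutes.

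For the main direction, assume $\M\neq\{0\}$ is invariant and $V_1 := T_w|_\M$, $V_2 := T_z|_\M$ doubly commute. First I would note that $V_1, V_2$ are commuting isometries with no unitary part: the unitary part of $V_1$ is $\bigcap_{n\ge0}V_1^n\M = \bigcap_{n\ge0}w^n\M \subseteq \bigcap_{n\ge0}w^n\h^2(\T^2) = \{0\}$, and likewise for $V_2$. Hence by Theorem \ref{wbs} we have $\M = \M_{ss}$, i.e.\ $(V_1,V_2)$ is a doubly commuting pair of unilateral shifts. Next, iterating the one-variable Wold decomposition and using the relation $V_1^*V_2 = V_2V_1^*$ (which, together with commutativity, makes $\ker V_1^*$ invariant under both $V_2$ and $V_2^*$, and pure as a shift there), I would derive the \emph{orthogonal} decomposition
\[
\M = \bigoplus_{j,k\ge0} w^jz^kE, \qquad E := \ker V_1^*\cap\ker V_2^* = \M\ominus(w\M+z\M),
\]
on which each $w^jz^k$ acts isometrically; this is exactly the model of \cite{Sl,Sarkar} for doubly commuting shift pairs.

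The heart of the matter is to show $\dim E = 1$. Suppose $\phi,\psi\in E$ are orthogonal. For all $j,k,j',k'\ge0$ the vectors $w^jz^k\phi$ and $w^{j'}z^{k'}\psi$ lie in summands of the decomposition above and are orthogonal: distinct summands are orthogonal, while $\langle w^jz^k\phi, w^jz^k\psi\rangle = \langle\phi,\psi\rangle = 0$ because $w^jz^k$ is isometric on $E$. Rewriting $\langle w^jz^k\phi, w^{j'}z^{k'}\psi\rangle = \int_{\T^2}\phi\bar\psi\,w^{j-j'}z^{k-k'}\,dm$ and letting $(j-j',k-k')$ exhaust $\Z^2$, I conclude that every Fourier coefficient of $\phi\bar\psi\in L^1(\T^2)$ vanishes, so $\phi\bar\psi = 0$ a.e.\ on $\T^2$. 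Since a nonzero element of $\h^2(\T^2)$ cannot vanish on a subset of $\T^2$ of positive measure (a standard consequence of the one-variable uniqueness theorem applied to slices), one of $\phi,\psi$ is zero; hence $\dim E\le1$, and $\dim E\ge1$ since $\M\neq\{0\}$. Writing then $E = \Cplx\phi$ with $\|\phi\| = 1$, orthonormality of $\{w^jz^k\phi\}_{j,k\ge0}$ forces all Fourier coefficients of $|\phi|^2$ to vanish except the $(0,0)$ one, which equals $1$; thus $|\phi| = 1$ a.e., so $\phi\in\h^\infty(\T^2)$ is inner, and $\M = \Span{w^jz^k\phi : j,k\ge0} = \overline{\phi\,\Cplx[w,z]} = \phi\h^2(\T^2)$, the last equality because $M_\phi$ is isometric on $L^2(\T^2)$ and therefore has closed range.

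I expect the multiplicity count $\dim E = 1$ to be the main obstacle, and it is precisely the point where double commutativity is indispensable: it is what makes the wandering decomposition of $\M$ an orthogonal sum, which in turn forces all the cross inner products $\langle w^jz^k\phi, w^{j'}z^{k'}\psi\rangle$ to vanish; combined with the uniqueness theorem for $\h^2(\T^2)$ this collapses the multiplicity to one. Without double commutativity one is left only with a Beurling--Lax--Halmos type representation by an operator-valued inner function of possibly higher multiplicity, which in general is not multiplication by a scalar inner function.
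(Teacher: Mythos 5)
Your proof is correct, but note that the paper does not prove this statement at all: Theorem 1.4 is quoted as a known result of Mandrekar with a citation to \cite{Man}, so there is no in-paper argument to compare against. What you have written is essentially the classical proof. The ``if'' direction via the unitary $M_\phi$ intertwining $(T_w,T_z)$ with the restricted pair is fine. In the ``only if'' direction, the chain purity of the restrictions $\Rightarrow$ Theorem 1.2 gives $\M=\M_{ss}$ $\Rightarrow$ the S{\l}oci\'nski--Sarkar wandering decomposition $\M=\bigoplus_{j,k\ge0}w^jz^kE$ is exactly where double commutativity enters, and your reduction of $\dim E=1$ to the vanishing of all Fourier coefficients of $\phi\bar\psi$ is sound. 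The one ingredient that deserves the emphasis you give it is that a nonzero element of $\h^2(\T^2)$ cannot vanish on a set of positive measure in $\T^2$; this is a genuinely two-variable fact (it follows from $\log|f|\in L^1(\T^2)$ for $0\ne f\in \h^2(\T^2)$, or from the slice argument you sketch, as in Rudin's book on polydiscs), and it is used twice: once to kill one of $\phi,\psi$, and implicitly nowhere else, so the argument closes. The final identification $\M=\overline{\phi\,\Cplx[w,z]}=\phi\h^2(\T^2)$ via isometry of $M_\phi$ on $L^2(\T^2)$ is also correct. In short: a complete and valid proof of a theorem the paper only cites.
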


Obviously, there are other subspaces invariant under the considered pair, where respective restrictions are no longer doubly commuting. An example is $\mathcal{M}:=\h^2(\T^2)\ominus \Cplx \cdot \mathbf{1}$ (orthogonal to constant functions). Then $(T_w|_{\mathcal{M}})^*T_z|_{\mathcal{M}}w=(T_w|_{\mathcal{M}})^*zw=z$ while $T_z|_{\mathcal{M}}(T_w|_{\mathcal{M}})^*w=0$. The invariant subspaces of $\h^2(\T^2)$ and  $L^2(\T^2)$ has been investigated in \cite{GS_inv} with respect to the Wold-type decomposition showed in \cite{GS_decom}.
  The aim of the paper is to improve the characterization of invariant subspaces of $\h^2(\T^2)$ and $L^2(\T^2)$. We take advantage of the concept of compatible isometries which covers the mentioned results as well as many examples. Precisely we describe invariant subspaces  where the operators preserve compatibility.
Section 2 is devoted to compatible pairs of isometries.
Section 3 concerns $\h^2(\T^2)$ where the main Theorem \ref{mainH} generalize Theorem \ref{Man} as well as results from \cite{GS_inv}. It is compared with relatively recent results in \cite{Seto2}.
Section 4 concerns $L^2(\T^2)$ where the main result is Theorem \ref{mainL}. There are constructed also unitary extensions of parts of $L_w, L_z$ obtained by each type of invariant subspace. In particular an example of a proper subspace of $L^2(\T^2)$ reducing $L_w, L_z$ to bilateral shifts is obtained.

\section{Compatible pairs of isometries}
 A pair of commuting isometries $V_1,V_2$ is said to be \emph{compatible} if $P_{\ran(V_1^m)}$ commute with $P_{\ran(V_2^n)}$ for every $m,n \in\mathbb{Z}_+$ (see \cite{Mucomp, Mucomp2}).  Compatible isometries can be decomposed into doubly commuting pairs, pairs given by diagrams and generalized powers (see \cite{Comp}). Let us recall the definitions of the above classes of operators.

 The idea of pairs given by a diagram appeared in \cite{Mucomp} (Example 1) while the precise definition and classification of diagrams can be found in  \cite{BKPSLAA}.
\begin{Def}
A set $J\subset\Z^2$ is \emph{a diagram} if $J+\Z_+^2\subset J$ as addition the coordinates.

Diagrams $J, J'$ are translation equivalent if $J=(i,j)+J'$ for some $(i,j)\in\mathbb{Z}$.

Diagrams equivalent to $\mathbb{Z}^2, \mathbb{Z}_+^2, \mathbb{Z}\times\mathbb{Z}_+, \mathbb{Z}_+\times\mathbb{Z}$ are called  \emph{simple}.

A diagram $J$ is \emph{periodic} if there are positive numbers $m,n$ such that for
 $J_0:=(\{0,1,\dots,m-1\}\times \mathbb{Z})\cap J$ and $J_k=J_0+k(m,-n)$ it holds
 $J=\bigcup_{k\in\mathbb Z} J_k,$  where $J_k$ are pairwise disjoint.
 The set $J_0$ is called a period of the diagram $J$. Moreover, $J_0$ and the positive integers $m,n$ determine $J$ by the formula $J=\bigcup_{k\in\mathbb{Z}}J_0+k(m,-n)$.

 Simple and periodic diagrams are called \emph{regular}, remaining are \emph{irregular}.
 \end{Def}
 For purposes of the paper it is convenient to define a pair of isometries given by an arbitrary diagram $J$ by giving the model based on $L_w, L_z$. Therefore, instead of Definition 4.3 from \cite{BKPSLAA} it is given another one. First step is to reformulate Definition 4.4 from \cite{BKPSLAA} as follows:

 \begin{Def}\label{defdiagram}
 A simple pair of isometries given by a diagram $J$ is a pair unitarily equivalent to operators $L_w|_{\M_J}, L_z|_{\M_J}$ where $\mathcal{M}_J:=\Span{w^iz^j:(i,j)\in J}\subset L^2(\T^2)$.
  \end{Def}

    Note that $\M_{\mathbb{Z}_+^2}=\h^2(\T^2)$ and the respective restrictions are $T_w, T_z$. Since $\ker T_w^*=\Span{z^i, i\in\Z_+}$ and $\ker T_z^*=\Span{w^i, i\in\Z_+}$ then the operators are unilateral shifts of infinite multiplicity. However $T_w, T_z$, as a pair, are generated by $\ker T_w^*\cap\ker T_z^*$ which is one-dimensional. Following this idea the multiplicity of a pair of doubly commuting unilateral shifts is defined as the dimension of $\ker T_w^*\cap\ker T_z^*$. In the case of a pair given by an arbitrary diagram $J$ the definition of multiplicity is not so obvious. More useful is the other approach. Recall that a model of doubly commuting unilateral shifts of multiplicity $n$ is obtained on the Hardy subspace of $L^2(\T^2,H)$ where $\dim H$ equals to the multiplicity (see f.e. \cite{NF,RR,Sarkar}). Any function in $L^2(\T^2,H)$ is identified with its Fourier series where coefficients are in $H$. Thus it is an easy observation that for any subspace $H_0\subset H$ the space $L^2(\T^2,H_0)$ may be regarded as a reducing subspace of $L^2(\T^2,H)$. Consequently, a decomposition $H=H_0\oplus H_1$ generates the reducing decomposition $L^2(\T^2,H)=L^2(\T^2,H_0)\oplus L^2(\T^2,H_1)$. Moreover, if $\M_{JH}\subset L^2(\T^2,H)$ denotes the subspace of functions whose Fourier coefficients with indices out of $J$ are zero then also $\M_{JH}=\M_{JH_0}\oplus \M_{JH_1}$. On the other hand if $\dim H=1$ then multiplication operators on $L^2(\T^2,H)$ are unitarily equivalent to $L^2(\T^2)$ and their restrictions to $\M_{JH}$ are simple pairs as in Definition \ref{defdiagram}.  Eventually, consider the restrictions of $L_w, L_z\in L^2(\T^2,H)$ to $\M_{JH}$ and $\{v_1,v_2,\dots\}$ an orthnormal basis of $H$. Then $H=\bigoplus_{n\ge 0}\mathbb{C}v_n$ generates the decomposition $\M_{JH}=\bigoplus_{n\ge 0}\M_{J\mathbb{C}v_n}$. In other words a pair $L_w|_{\M_{JH}}, L_z|_{\M_{JH}}$ is decomposed into $N$ simple pairs of isometries given by a diagram $J$ where $N=\dim H$. Since the number of simple pairs equals to the dimension of $H$ it is unique. Moreover, by Remark 4.5 in \cite{BKPSLAA} any pair of isometries given by a diagram $J$ can be decomposed into simple pairs. Thus $L_w|_{\M_{JH}}, L_z|_{\M_{JH}}$ turns out to be a model of pairs of isometries defined by a diagram.
  \begin{Def}
  A pair of isometries given by a diagram $J$ is a pair unitarily equivalent to $L_w|_{\M_J}, L_z|_{\M_J}\in L^2(\T^2,H)$ where $\M_J=\{f\in L^2(\T^2,H):\hat{f}_{ij}=0, \text{ for } (i,j)\notin J\}$ and $H$ is a Hilbert space. The dimension of $H$ is called the multiplicity of such a pair.
  \end{Def}
Note that the space of a unitary extension of a simple pair given by any diagram is equivalent to the whole $L^2(\T^2)$. Thus a pair given by a diagram may be a part of $L_w, L_z\in L^2(\T^2)$ only if it is a simple pair.
 Let us recall the definition of generalized powers \cite{Comp}.

\begin{Def}\label{gpdef}
Let be given a periodic diagram $J=\bigcup_{k\in\mathbb{Z}}J_0+k(m,-n)$ and a unitary operator $\mathcal{U}\in \B(\mathcal{H})$ with a star cyclic vector $e$.

Define
 \begin{itemize}
 \item $H:=\bigoplus_{(i,j)\in J_0}H_{i,j}$ where $H_{i,j}=\mathcal{H}$,
 \item $e_{i,j}\in H$ a vector such that $P_{H_{i_0,j_0}}e_{i,j}=\mathcal{U}^ke$ and $P_{H_{i_0',j_0'}}e_{i,j}=0$ for $(i_0',j_0')\ne(i_0,j_0)$ for every $(i,j)\in J$ where $(i_0, j_0)\in  J_0$ and $k\in\mathbb{Z}$ are unique such that $(i,j)=(i_0+km,j_0-kn)$,
 \item $V_1 e_{i,j}=e_{i+1,j}$  and $V_2e_{i,j}=e_{i,j+1}$.
 \end{itemize}

 A pair of isometries $V_1, V_2$ is called \emph{generalized powers}.
 \end{Def}
  Generalized powers are pairs of compatible unilateral shifts $V_1, V_2\in\B(H)$ commuting with $U$ and such that $V_1^m=UV_2^n$ where $U$ denotes the extension of $\mathcal{U}$ to $H$ given in a natural way $U(\sum_{(i,j)\in J_0} x_{i,j})=\sum_{(i,j)\in J_0}\mathcal{U}x_{i,j,}$ (see \cite{Comp,BKPSLAA}).

A period $J_0$ an numbers $m,n$ determine a periodic diagram. However, as in other periodic concepts, the same diagram can be denoted by various periods.
  \begin{rem}\label{periodre}
 Let $V_1, V_2$ be a pair of generalized powers given by a diagram $J=\bigcup_{k\in\mathbb{Z}}J_0+k(m,-n)$. If we define a multiplied period $J'_0:= \bigcup_{k=0}^{l-1}J_0+k(m,-n)$ then $J=\bigcup_{k\in\mathbb{Z}}J'_0+k(lm,-ln)$. Since $U=V_2^{*n}V_1^m$ is unitary then one can show that  $U^l=(V_2^{*n}V_1^m)^l=V_2^{*ln}V_1^{lm}=U'$. Moreover, $H$ is decomposed into larger number of spaces $H'_{i,j}$, so they are different than $H_{i,j}$. One can check that $H_{i,j}=\bigoplus_{k=0}^{l-1}H'_{i+km,j-kn}$. In conclusion, generalized powers depend on a periodic diagram, not on its period. However, the unitary operator $\mathcal{U}$ is related to the choice of a period.
 \end{rem}

 The next result gives an equivalent condition for a pair to be generalized powers.

  \begin{lem}\label{gplemma}
   Isometries  $V_1,V_2$ are generalized powers given by a periodic diagram $J=\bigcup_{k\in\mathbb{Z}}J_0+k(m,-n)$ if and only if there is a decomposition $H:=\bigoplus_{(i,j)\in J_0}H_{i,j}$ such that:
 \begin{itemize}
      \item $V_2^{*n}V_1^m|_{H_{i,j}}$ are unitary operators on $H_{i,j}$ having a star cyclic vector,
    \item $V_1^{i-i'}V_2^{j-j'}$ are unitary operators between $H_{i,j}$ and $H_{i',j'}$ where $V_\iota^\kappa=V_\iota^{*|\kappa|}$ for $\kappa<0$.
      \end{itemize}
      \end{lem}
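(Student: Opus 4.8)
The plan is to prove necessity by unwinding Definition~\ref{gpdef}, and sufficiency by reconstructing from the given decomposition the data $(\mathcal{H},\mathcal{U},e)$ of that definition and checking that $V_1,V_2$ are exactly the generalized powers it produces. For necessity, suppose $V_1,V_2$ are built as in Definition~\ref{gpdef} from a unitary $\mathcal{U}$ with star cyclic vector $e$ and the period $J_0$, and take the decomposition $H=\bigoplus_{(i,j)\in J_0}H_{i,j}$ given there. Since $V_1^m=UV_2^n$ with $U$ the ampliation of $\mathcal{U}$, which commutes with $V_2$, and $V_2$ is an isometry, $V_2^{*n}V_1^m=V_2^{*n}UV_2^n=U$; restricted to any $H_{i,j}$ this is $\mathcal{U}$, a unitary with star cyclic vector $e_{i,j}$, which is the first bullet. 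For the second, the relations $V_1e_{a,b}=e_{a+1,b}$ and $V_2e_{a,b}=e_{a,b+1}$, together with the parametrisation $(a,b)=(i_0+km,j_0-kn)$ of $J$ by $J_0\times\mathbb{Z}$, show that each one-step restriction $V_1|_{H_{i,j}}\colon H_{i,j}\to H_{i+1,j}$ (for $i\le m-2$) and $V_2|_{H_{i,j}}\colon H_{i,j}\to H_{i,j+1}$ sends $\mathcal{U}^ke$ to $\mathcal{U}^ke$, hence is unitary onto the target summand (closed range since $V_\iota$ is an isometry, dense range since the $\mathcal{U}^ke$ are total); as $|i-i'|<m$ for $(i,j),(i',j')\in J_0$, the operator $V_1^{i-i'}V_2^{j-j'}$ is a composition of such maps, and the second bullet follows.

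For sufficiency, assume the two bullets for some decomposition $H=\bigoplus_{(i,j)\in J_0}H_{i,j}$. Fix $(p,q)\in J_0$ and put $\mathcal{H}:=H_{p,q}$, $\mathcal{U}:=V_2^{*n}V_1^m|_{H_{p,q}}$, which by the first bullet is a unitary on $\mathcal{H}$ with star cyclic vector $e$. Read for one-step shifts, the second bullet says that for each $(i,j)\in J_0$ the maps $V_2|_{H_{i,j}}\colon H_{i,j}\to H_{i,j+1}$, $V_1|_{H_{i,j}}\colon H_{i,j}\to H_{i+1,j}$ (when $i\le m-2$) and their adjoint ``inverse shifts'' are unitary. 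Since $J_0$ sits inside the strip $\{0,\dots,m-1\}\times\mathbb{Z}$, whose grid graph is connected with cycle space spanned by unit squares, these elementary unitaries patch together: one builds unitaries $\Phi_{i,j}\colon\mathcal{H}\to H_{i,j}$, $(i,j)\in J_0$, with $\Phi_{p,q}=\mathrm{id}$, $V_2\Phi_{i,j}=\Phi_{i,j+1}$ and $V_1\Phi_{i,j}=\Phi_{i+1,j}$ for $i\le m-2$. The consistency required around a unit square is the identity $V_2^{*}V_1^{*}V_2V_1=\mathrm{id}$ on the bottom-left summand, which follows from $V_1V_2=V_2V_1$, $V_\iota^{*}V_\iota=\mathrm{id}$ and injectivity of $V_\iota$ (double commutativity is \emph{not} needed, but each adjoint cancellation must be applied to vectors lying in $\ran V_\iota$, which the second bullet supplies). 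The remaining ingredient is the wrap-around: I would show that $V_1$ carries $H_{m-1,j}$ unitarily onto $H_{0,j+n}$ and that $\Phi_{0,j+n}^{-1}V_1\Phi_{m-1,j}=\mathcal{U}$ for every $j$ with $(m-1,j)\in J_0$. This is where the first bullet is essential: from $\|V_2^{*n}V_1^{m}x\|=\|x\|=\|V_1^{m}x\|$ for $x\in H_{m-1,j}$ (isometry of $V_1$, contractivity of $V_2^{*n}$) one gets $V_1^{m}(H_{m-1,j})\subset\ran V_2^{n}$, hence $V_1^{m}(H_{m-1,j})=V_2^{n}(H_{m-1,j})=H_{m-1,j+n}$; peeling off $V_1^{m-1}$, which maps $H_{0,j+n}$ unitarily onto $H_{m-1,j+n}$ (here the periodicity of $J$ is used to see $(0,j+n)\in J_0$), yields $V_1(H_{m-1,j})=H_{0,j+n}$, and the equality $\Phi_{0,j+n}^{-1}V_1\Phi_{m-1,j}=\mathcal{U}$ is then checked by reducing, through the one-step relations and commutativity, to $j=q$ and unwinding $\mathcal{U}=V_2^{*n}V_1^{m}|_{H_{p,q}}$. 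With these structure equations, $\bigoplus_{(i,j)\in J_0}\Phi_{i,j}^{-1}$ is a unitary of $H$ onto $\bigoplus_{(i,j)\in J_0}\mathcal{H}$ intertwining $V_1,V_2$ with the generalized powers produced by $(\mathcal{H},\mathcal{U},e,J_0)$ in Definition~\ref{gpdef}; so $V_1,V_2$ are generalized powers given by $J$.

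The analytic input is minimal; the work lies in the sufficiency direction. The delicate points are, first, assembling the consistent identifications $\Phi_{i,j}$ — essentially a cocycle-triviality argument over the strip — where, because $V_1,V_2$ are not assumed doubly commuting, every adjoint cancellation $V_\iota V_\iota^{*}=\mathrm{id}$ (and $V_2^{n}V_2^{*n}=\mathrm{id}$) has to be restricted to a subspace on which it actually holds, read off from the second bullet or from the norm equalities; and second, the wrap-around identity, which is the only place the hypothesis $\mathcal{U}=V_2^{*n}V_1^{m}|_{H_{p,q}}$ enters and which is what captures the periodicity of $J$. A routine case analysis on the signs of the exponents accompanies the computations throughout and I would suppress it.
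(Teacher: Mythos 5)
Your proposal is correct and follows essentially the same route as the paper's proof: necessity by unwinding Definition~\ref{gpdef}, and sufficiency by using the first bullet to make $V_2^{*n}V_1^m$ unitary on the summands (hence $\ran V_1^m\subset\ran V_2^n$, which is exactly what drives the wrap-around $V_1H_{m-1,j}=H_{0,j+n}$, the paper's $V_1e_{m-1,j}=V_1^me_{0,j}=V_2^nUe_{0,j}$) and the second bullet to identify all the $H_{i,j}$ with a single $\mathcal{H}$. The only difference is bookkeeping: the paper proves once that $U=V_2^{*n}V_1^m$ is globally unitary and (doubly) commutes with $V_1,V_2$ and then verifies the relations on the distinguished vectors $e_{i,j}$, whereas you assemble explicit intertwiners $\Phi_{i,j}$ via a cocycle argument --- more laborious but equivalent.
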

      \begin{proof}
   Assume $V_1, V_2$ to be generalized powers. The condition $V_1^m=UV_2^n$ is equivalent to $V_2^{*n}V_1^m=U$ which simply means that $V_2^{*n}V_1^m$ is a unitary operator. Moreover, the formula $U(\sum_{(i,j)\in J_0} x_{i,j})=\sum_{(i,j)\in J_0}\mathcal{U}x_{i,j}$ implies that $H_{i,j}$ reduces $V_2^{*n}V_1^m$ for any $(i,j)\in J_0$. Since $V_2 H_{i,j}=H_{i,j+1}$  for $(i,j)\in J_0$ and $V_1H_{i,j}=H_{i+1,j}$ for $(i,j)\in J_0$ where  $i\ne m-1$ then $V_1^{i-i'}V_2^{j-j'}$, as an operator acting between $H_{i,j},$ and $H_{i',j'}$ is unitary.

   For the reverse implication note that the powers $m,n$ in the first condition and the set $J_0$ in the decomposition provides a periodic diagram $J$.
    By the first condition $U:=V_2^{*n}V_1^m$ is unitary. However, since $V_1, V_2$ are isometries it follows $\ran(V_1^m)=\ran(V_2^n)$. Hence $V_1^m=V_2^nV_2^{*n}V_1^m=V_2^nU$ and $V_2U=V_2V_2^{*n}V_1^m=V_2^{*n-1}V_1^m=V_2^{*n}V_2V_1^m=V_2^{*n}V_1^mV_2=UV_2$. Thus $U$ commutes with $V_2$. Similarly $U^*$ commutes with $V_1$ and, as a unitary operator, doubly commute with $V_1$. Thus $U$ (doubly) commutes with both $V_1, V_2$. Since $H_{i,j}$ reduces $U$ for each $(i,j)\in J_0$, the operator $U|_{H_{i,j}}$ is unitary. Moreover, the proved commutativity implies the equivalence of all $U|_{H_{i,j}}$, in details $U|_{H_{i,j}}=(V_1^{i-i'}V_2^{j-j'}|_{H_{i,j}})^*U|_{H_{i',j'}}(V_1^{i-i'}V_2^{j-j'}|_{H_{i,j}})$. So, $\mathcal{U}:=U|_{H_{i,j}}$ does not depend on the choice of $H_{i,j}$. Moreover, $U(\sum_{(i,j)\in J_0} x_{i,j})=\sum_{(i,j)\in J_0}\mathcal{U}x_{i,j}$. Now we can follow Definition \ref{gpdef} and define vectors $e_{i,j}$. Note that $Ue_{i,j}=e_{i+m,j-n}$. It has left to check formulas $V_1 e_{i,j}=e_{i+1,j}$  and $V_2e_{i,j}=e_{i,j+1}$ for $(i,j)\in J$.  By the commutativity of $U$ with $V_1, V_2$ and the relation $Ue_{i,j}=e_{i+m,j-n}$  it is enough to prove the formulas for $(i,j)\in J_0$. By the second condition for $(i',j')=(i,j+1)$ and $(i',j')=(i+1,j)$ respectively we get $V_2e_{i,j}=e_{i,j+1}$ for $(i,j)\in J_0$ and $V_1e_{i,j}=e_{i+1,j}$ for $(i,j)\in J_0, i\ne m-1$ while $V_1e_{m-1,j}=V_1^me_{0,j}=V_2^nUe_{0,j}=V_2^ne_{m,j-n}=e_{m,j}$.  \end{proof}
  Note that if for some $(i,j),(i',j')\in J$ holds $i_0'=i_0,j_0'=j_0$ for respective $(i_0,j_0),(i_0',j_0')\in J_0$ then vectors $e_{i,j},e_{i',j'}$ may not be orthogonal to each other (f.e. if $\mathcal{U}=I$ they are equal). This differs generalized powers from pairs given by diagrams. However, if $\mathcal{U}$ is a bilateral shift then the considered vectors are orthogonal. In fact generalized powers defined by bilateral shifts are precisely pairs defined by periodic diagrams. Moreover, two equivalent diagrams define unitarily equivalent pairs of isometries and consequently simple diagrams define doubly commuting pairs of isometries. The uniqueness in the following decomposition (Theorem 4.12 from \cite{BKPSLAA}) follows by the irregularity of diagrams in $H_d$.

\begin{thm} \label{decomposition_c_cnc}
For any pair of commuting isometries $V_1, V_2$ on the Hilbert space $H$ there is a unique decomposition: $$H = H_{uu}\oplus H_{us}\oplus H_{su}\oplus H_{Hardy}\oplus H_{d}\oplus \h_{gp} \oplus \h_{cnc}$$ where the restrictions of the operators $V_1, V_2$ to the space:
\begin{enumerate}
\item  $H_{uu}$ are unitary,
\item  $H_{su}$ are a unilateral shift and a unitary operator respectively,
\item  $H_{us}$ are a unitary operator and a unilateral shift respectively,
\item  $H_{Hardy}$ are a pair of doubly commuting unilateral shifts,
\item  $H_{d}$ can be decomposed into pairs given by irregular diagrams,
\item  $H_{gp}$ can be decomposed into generalized powers,
\item  $H_{cnc}$ is a completely non compatible pair.
\end{enumerate}
\end{thm}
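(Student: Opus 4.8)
The plan is to assemble the decomposition in three stages: first separate the part where the pair behaves non-compatibly, then apply the known structure theory of compatible pairs, and finally refine the doubly commuting component. \textbf{Stage 1.} I would begin by producing a largest reducing subspace $H_c\subseteq H$ on which $(V_1|_{H_c},V_2|_{H_c})$ is compatible, and put $H_{cnc}:=H\ominus H_c$. The point to verify is that such a maximal $H_c$ exists and reduces both operators: if $K$ reduces $V_1,V_2$ then $\ran(V_i|_K)^n=V_i^nK=P_K(\ran V_i^n)$, so $P_{\ran(V_i|_K)^n}=P_KP_{\ran V_i^n}P_K$, whence commutativity of the projections $P_{\ran V_1^m},P_{\ran V_2^n}$ is inherited by orthogonal sums of reducing subspaces. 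A maximal orthogonal family of reducing subspaces carrying a compatible restriction thus has a direct sum $H_c$ with the same property, and maximality forces $(V_1|_{H_{cnc}},V_2|_{H_{cnc}})$ to be completely non-compatible; this summand is thereby uniquely determined.

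\textbf{Stage 2.} On $H_c$ the pair is compatible, so I would invoke the decomposition of compatible isometries from \cite{Comp}: $H_c$ splits orthogonally into a reducing summand on which the pair is doubly commuting, one that decomposes into pairs given by diagrams, and one that decomposes into generalized powers. It then remains to sort the diagram summands by the simple/periodic/irregular trichotomy. By the material preceding the theorem, a pair given by a simple diagram is doubly commuting (equivalent diagrams give unitarily equivalent pairs, and the four simple diagrams $\Z^2,\Z\times\Z_+,\Z_+\times\Z,\Z_+^2$ yield the four unitary/shift combinations), while a pair given by a periodic (regular) diagram is a pair of generalized powers, namely those whose associated unitary $V_2^{*n}V_1^m$ is a bilateral shift, by Lemma \ref{gplemma} and Remark \ref{periodre}. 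Absorbing the simple-diagram pieces into the doubly commuting summand and the periodic-diagram pieces into the generalized-powers summand, the leftover diagram contribution $H_d$ is precisely the part that decomposes into pairs given by \emph{irregular} diagrams, and $H_{gp}$ is the generalized-powers summand.

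\textbf{Stage 3 and uniqueness.} Applying S{\l}oci{\'n}ski's Theorem \ref{wbs} to the doubly commuting summand produces the four reducing pieces $H_{uu},H_{us},H_{su},H_{ss}$, on which the pair is (unitary, unitary), (unitary, shift), (shift, unitary) and (shift, shift) respectively; setting $H_{Hardy}:=H_{ss}$ gives the claimed seven-fold decomposition. For uniqueness, each summand is recovered from intrinsic properties of the restricted pair: $H_{cnc}$ as the orthogonal complement of the maximal compatible reducing subspace; $H_{uu},H_{us},H_{su}$ by which of the two restricted isometries is unitary and which is a unilateral shift, together with the uniqueness in Theorem \ref{W} and Theorem \ref{wbs}; $H_{Hardy}$ as the maximal reducing subspace on which the pair is a pair of doubly commuting unilateral shifts, which separates it from $H_d$ and $H_{gp}$ since double commutativity fails for pairs given by irregular diagrams and for generalized powers; and $H_{gp}$, $H_d$ from one another because the diagram attached to such a pair is an invariant of it, and an irregular diagram is neither simple nor periodic --- this is exactly the observation that uniqueness follows from the irregularity of the diagrams in $H_d$. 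Lemma \ref{gplemma} is the tool for recognising the $H_{gp}$ summand intrinsically.

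\textbf{The main obstacle} is Stage 1 together with the fine-structure input used in Stage 2: proving that a maximal compatible reducing subspace exists and reduces both isometries, and that, after removing the doubly commuting part, the remaining compatible pairs are exhausted by pairs given by diagrams and by generalized powers with the irregular-diagram pieces genuinely isolated. This is the substantive content drawn from \cite{Mucomp,Comp,BKPSLAA}; granting it, Stage 3 and the bookkeeping for uniqueness are routine.
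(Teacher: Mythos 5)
The first thing to say is that the paper does not prove this statement at all: it is quoted verbatim as Theorem 4.12 of \cite{BKPSLAA}, and the only in-paper commentary on its proof is the single remark that ``the uniqueness \dots follows by the irregularity of diagrams in $H_d$.'' So your proposal cannot be measured against an in-paper argument; it can only be measured against the surrounding discussion and the cited sources. Judged that way, your three-stage plan (split off the completely non-compatible part, apply the compatible-pair structure theory of \cite{Comp}, then refine the doubly commuting summand by Theorem \ref{wbs} and sort diagrams into simple/periodic/irregular) is exactly the route the paper's exposition suggests, and Stages 2--3 are a faithful reassembly of what the paper explicitly imports.

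There is, however, one genuine gap in the part you present as an actual argument rather than as a citation. In Stage 1 you build $H_c$ as the orthogonal sum of a \emph{maximal orthogonal family} of reducing subspaces with compatible restriction, and then assert that $H_{cnc}=H\ominus H_c$ is ``thereby uniquely determined.'' Maximality of the family only gives that $H\ominus H_c$ contains no nonzero compatible reducing subspace, i.e.\ the \emph{existence} of such a splitting; it does not give canonicity, since two different maximal families could a priori produce different $H_c$'s. Your lemma that compatibility passes to orthogonal sums of reducing subspaces does not close this, because the closed span of two non-orthogonal compatible reducing subspaces is not obviously compatible (the ranges $\ran V_i^n$ restricted to the span need not decompose nicely). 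Uniqueness of $H_{cnc}$ — and hence of the whole sevenfold decomposition — genuinely requires the structure theory of \cite{Mucomp,Comp,BKPSLAA}, not Zorn's lemma. The same caveat applies to your uniqueness bookkeeping in Stage 3: separating $H_{Hardy}$, $H_d$ and $H_{gp}$ needs the (nontrivial, cited) fact that a pair given by an irregular diagram admits no nonzero reducing subspace on which it is doubly commuting or generalized powers; you state this as if the diagram being ``an invariant of the pair'' settles it, but that is precisely the content being deferred. Since you do flag these as the main obstacles, the proposal is an honest skeleton, but it should not be read as a self-contained proof of either existence-of-$H_{cnc}$-uniqueness or the mutual orthogonality of the last three summands.
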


\section{Invariant subspace on Hardy space}
In the introduction it was recalled that $\h^2(\T^2)$ has no nontrivial reducing subspaces. Indeed, since $T_w, T_z$ doubly commute they are compatible and projections $(I-T_w^*T_w), (I-T_z^*T_z)$ commute. Thus $(I-T_w^*T_w)(I-T_z^*T_z)=P_{\ker T_w^*\cap\ker T_z^*}$ and every subspace reducing under $T_w, T_z$ is invariant under $P_{\ker T_w^*\cap\ker T_z^*}$. However, since $\ker T_w^*\cap \ker T_z^*$ is one-dimensional then either the considered subspace contains $\ker T_w^*\cap \ker T_z^*$ or is orthogonal to it. Since a subspace containing $\ker T_w^*\cap \ker T_z^*$ and invariant under $T_w, T_z$  is the whole $\h^2(\T^2)$ then in the first case the considered subspace is $\h^2(\T^2)$ while in the second it is an orthogonal complement of $\h^2(\T^2)$, so it is a zero subspace.

\begin{thm}\label{mainH}
Let $\{0\}\not=\M \subset \h^2(\T^2)$ be an invariant subspace. The pair $(T_w|_{\M},T_z|_{\M})$ is compatible if and only if $\M=\phi \M_J$, for an inner function $\phi$ and a diagram $J\subset \Z_+^2$, where $\M_J=\Span{w^iz^j:(i,j)\in J}$.
\end{thm}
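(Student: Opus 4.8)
The plan is to handle the two implications separately; the forward one is a short verification, while the converse carries the content. $(\Leftarrow)$ Suppose $\M=\phi\M_J$ with $\phi$ inner and $J\subseteq\Z_+^2$ a diagram. Since $J+\Z_+^2\subseteq J$ and $\phi$ is a bounded element of $\h^2(\T^2)$, the space $\M$ is a closed invariant subspace of $\h^2(\T^2)$. Multiplication by $\phi$ is an isometry of $L^2(\T^2)$ commuting with $L_w,L_z$ and carrying $\M_J$ onto $\M$, so it implements a unitary equivalence between $(L_w|_{\M_J},L_z|_{\M_J})$ and $(T_w|_{\M},T_z|_{\M})$. It then suffices to note that the simple pair $(L_w|_{\M_J},L_z|_{\M_J})$ is compatible: in the orthonormal basis $\{w^iz^j:(i,j)\in\Z^2\}$ of $L^2(\T^2)$ the projections $P_{w^m\M_J}$ and $P_{z^n\M_J}$ are diagonal (with supports $(m,0)+J$ and $(0,n)+J$), hence commute, and compatibility is a unitary invariant. (One could instead invoke that pairs given by diagrams are compatible, via Theorem \ref{decomposition_c_cnc}.)

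$(\Rightarrow)$ Let $(V_1,V_2):=(T_w|_{\M},T_z|_{\M})$ be compatible with $\M\ne\{0\}$. First I would record that $V_1,V_2$ are unilateral shifts, since the unitary part of $V_1$ equals $\bigcap_n w^n\M\subseteq\bigcap_n w^n\h^2(\T^2)=\{0\}$, and likewise for $V_2$. Then I apply Theorem \ref{decomposition_c_cnc} to $(V_1,V_2)$ on $\M$: the summands on which one operator is unitary vanish and there is no completely non-compatible summand, so $\M=\M_{\mathrm H}\oplus\M_{\mathrm d}\oplus\M_{\mathrm{gp}}$, with the restrictions being a pair of doubly commuting shifts, a direct sum of pairs given by irregular diagrams, and a direct sum of generalized powers. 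All these summands are $T_w,T_z$-invariant and mutually orthogonal; but two nonzero $T_w,T_z$-invariant subspaces of $\h^2(\T^2)$ cannot be orthogonal, because for nonzero $f_1,f_2$ in them one would get $\langle f_1,w^iz^jf_2\rangle=0$ for all $(i,j)\in\Z^2$, i.e. $f_1\overline{f_2}=0$ a.e., contradicting that a nonzero function of $\h^2(\T^2)$ vanishes only on a null set. Hence at most one summand is nonzero, and the same argument forces $\M_{\mathrm d}$, when nonzero, to be a single irregular-diagram pair. Moreover $\M_{\mathrm{gp}}=\{0\}$: on a generalized-powers piece $\K$ one has $w^m\K=z^n\K$ for some positive integers $m,n$, so $\K\subseteq w^m\h^2(\T^2)\cap z^n\h^2(\T^2)=w^mz^n\h^2(\T^2)$, and iterating this inclusion gives $\K\subseteq\bigcap_k w^{km}z^{kn}\h^2(\T^2)=\{0\}$.

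Thus $\M$ equals its doubly commuting summand or its single irregular-diagram summand. In the first case Mandrekar's Theorem \ref{Man} gives $\M=\phi_0\h^2(\T^2)=\phi_0\M_{\Z_+^2}$ with $\phi_0$ inner. In the second, using the remark that a pair given by a diagram which is a part of $(L_w,L_z)$ on $L^2(\T^2)$ must be a simple pair, $(V_1,V_2)$ is unitarily equivalent, via some $W\colon\M_{J'}\to\M$, to $(L_w|_{\M_{J'}},L_z|_{\M_{J'}})$ for a single diagram $J'\subseteq\Z^2$. Since $\M_{J'}$ and $\M$ each generate $L^2(\T^2)$ as a subspace reducing $L_w,L_z$ — for $\M$ because any single nonzero $f\in\M$ is a.e. nonzero, so $\{w^iz^jf:(i,j)\in\Z^2\}$ spans $L^2(\T^2)$ — the formula $w^iz^jg\mapsto w^iz^jWg$ extends $W$ to a unitary on $L^2(\T^2)$ commuting with $L_w,L_z$, i.e. to multiplication by a unimodular $\psi$; hence $\M=\psi\M_{J'}$. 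Finally $\M\subseteq\h^2(\T^2)$ forces, for each $(i,j)\in J'$, $\mathrm{supp}\,\hat\psi\subseteq(-i,-j)+\Z_+^2$ (since $\psi w^iz^j\in\M$); the intersection of these translates of $\Z_+^2$ is nonempty (otherwise $\psi=0$), hence equals $(a,b)+\Z_+^2$, so $\phi:=w^{-a}z^{-b}\psi$ is unimodular with $\mathrm{supp}\,\hat\phi\subseteq\Z_+^2$, i.e. inner, $J:=J'+(a,b)\subseteq\Z_+^2$ is a diagram, and $\M=\psi\M_{J'}=\phi\M_J$.

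I expect the main obstacle to be the converse direction, and inside it two points: collapsing the abstract Wold-type decomposition of Theorem \ref{decomposition_c_cnc} to a single homogeneous piece (ruling out generalized powers inside $\h^2(\T^2)$, and using orthogonality of invariant subspaces to discard all but one diagram), and the passage from the abstract model $(L_w|_{\M_{J'}},L_z|_{\M_{J'}})$ to the concrete form $\phi\M_J$ — that is, producing the single scalar inner function and verifying $J\subseteq\Z_+^2$. The preliminary reductions — that the restrictions are pure shifts and the direction $(\Leftarrow)$ — are routine.
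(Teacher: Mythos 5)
Your proof is correct, and while the skeleton of the converse (reduce via Theorem \ref{decomposition_c_cnc} to a single nonzero summand, then treat the doubly commuting and diagram cases separately) coincides with the paper's, both substantive steps are carried out by genuinely different arguments. To exclude generalized powers the paper proves the vanishing property $T_z^{*n_k}T_w^{m_k}x\to 0$ and derives the contradiction $\|x\|=\|U^kx\|\to 0$; you instead use the range identity $w^m\K=z^n\K$ to force $\K\subseteq\bigcap_k w^{km}z^{kn}\h^2(\T^2)=\{0\}$ --- a purely combinatorial Fourier-support argument with no limits (just spell out that $z^nf\in w^m\h^2(\T^2)$ together with $f\in\h^2(\T^2)$ gives $f\in w^m\h^2(\T^2)$, and that the iteration really means re-running the argument with $V_1^{km}=U^kV_2^{kn}$). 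For the diagram case the paper uses the same vanishing property to show $J$ has finitely many corners, applies Mandrekar's Theorem \ref{Man} to each corner quadrant $\M_\alpha=\phi_\alpha\h^2(\T^2)$, and patches the $\phi_\alpha$ into a single inner function by comparing Fourier coefficients; you instead extend the intertwining unitary $W$ to a unimodular multiplier $\psi$ of $L^2(\T^2)$ and then correct $\psi$ to an inner function by intersecting the translates $(-i,-j)+\Z_+^2$, recovering the boundedness of $J'$ from the nonemptiness of $\mathrm{supp}\,\hat\psi$. This is in effect the paper's $L^2$ result (Theorem \ref{diagramL}) specialized and pushed into $\h^2$; it yields the single inner function more directly, at the cost of two external ingredients you should cite: the commutant fact that a unitary on $L^2(\T^2)$ commuting with $L_w,L_z$ is multiplication by a unimodular function (essentially \cite{GM}), and the fact that a nonzero function in $\h^2(\T^2)$ is nonzero a.e.\ on $\T^2$ (Rudin's polydisc theory) --- the latter also underlies your orthogonality lemma, which usefully makes explicit what the paper only asserts when it discards all but one summand of the decomposition.
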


\begin{proof}
Let $\M=\phi \M_J=\Span{\phi w^iz^j: (i,j)\in J}$ for a given inner function $\phi$ and a diagram $J$. Since $\phi$ is inner then operator $T_\phi:\M_J\ni f\to\phi f\in\phi \M_J$ is unitary with $T_\phi^*=T_{\overline{\phi}}$. Thus a pair $(T_w|_{\M}, T_z|_{\M})$ is unitarily equivalent to $(T_w|_{\M_J}, T_z|_{\M_J})$, so it is given by a diagram $J$.

For the reverse implication let $\M\not=\{0\}$ be such that $T_w|_{\M}, T_z|_{\M}$ are compatible. The pair $T_w|_\M, T_z|_\M$ can be decomposed by Theorem \ref{decomposition_c_cnc}. Since the operators are compatible unilateral shifts, the decomposition is reduced to $\M=\M_{Hardy}\oplus H_d\oplus H_{gp}$. Note that if $\M_{Hardy}\ne \{0\}$ then it is equivalent to $\h^2(\T^2)$ and then $\M_d=\M_{gp}=\{0\}$. Similarly, if $H_d\ne\{0\}$ then it contains a subspace unitarily equivalent to $\h^2(\T^2)$ and $\M_{Hardy}=\M_{gp}=\{0\}$. So we may assume that exactly one subspace in the decomposition is nontrivial. If $T_w|_\M, T_z|_\M$ doubly commute then by Theorem \ref{Man} we get the statement.

For the remaining cases let us show that
\begin{equation}\tag{\dag}\label{vanish}
\lim_{k\to\infty}T_z^{*n_k}T_w^{m_k}x=0,\quad \lim_{k\to\infty}T_w^{*m_k}T_z^{n_k}x=0
\end{equation}
for any $x\in\h^2(\T^2)$ and any increasing sequence $\{(m_k,n_k)\}\subset \Z_+^2$. Since $\h^2(\T^2)=\Span{w^iz^j, i,j\in\Z_+}$, it is enough to show that for any $i,j\in\Z_+$ there is $k$ such that $T_z^{*n_k}T_w^{m_k}w^iz^j=0$. Indeed, since $\ker T_z^*=\Span{w^i:i\in\Z_+}$ and $\{n_k\}$ is increasing, for any vector $w^iz^j$ there is $k$ such that $n_k>j$ and consequently $T_z^{*n_k}T_w^{m_k}w^iz^j=T_z^{*n_k-j}w^{i+m_k}=0$.

 If $T_w|_\M, T_z|_\M$  are generalized powers then there are $m,n\in \Z_+$  such that $T_w^m|_\M=UT_z^n|_\M$, where $U\in\B(\M)$ is a unitary operator commuting with $T_w|_\M$ and $T_z|_\M$. Note that $U=(T_z|_{\M})^{*n}(T_w|_\M)^m=(P_\M T_z^*)^nT_w^m|_\M$. Thus $(P_\M T_z^*)^nT_w^m|_\M$ is an isometry and $P_\M$, as a norm preserving projection, may be removed
 from the formula.  Thus, $U=T_z^{*n}T_w^m|_\M,\;\ran({T_z^{*n}T_w^m|_\M})\subset \M$ and consequently $U^k=(T_z^{*n}T_w^m|_\M)^k=(T_z^{*n}T_w^m)^k|_\M$. Moreover, $T_z^nT_z^{*n}T_w^m|_\M=T_z^nU=UT_z^n|_\M=T_w^m|_\M$ and $U^k=T_z^{*kn}T_z^{kn}U^k=T_z^{*kn}U^kT_z^{kn}|_\M=T_z^{*kn}(T_z^{*n}T_w^m)^kT_z^{kn}|_\M=T_z^{*kn}T_w^{km}|_\M$. Thus from (\ref{vanish}) we get $\|x\|=\|U^kx\|\to 0$ for any $x\in\M$. Since we assumed $\M\ne\{0\}$, the restrictions may not be generalized powers.

 Let $T_w|_\M, T_z|_\M$ be a simple pair given by a diagram $J\subset \Z^2$. Denote $f_{i,j}\simeq w^iz^j$ for $(i,j)\in J$ where $\simeq$ denotes the unitary equivalence as in Definition \ref{defdiagram}.
Let us observe that $J$ is bounded from below i.e. there is $N\in\Z$ such that $J\subset \{(m,n): n \geq N, m\in \Z\}$. Indeed if not, for some $x\in\M$ there is a sequence $\{(m_k,n_k)\}_{k\in\Z_+}$ such that $n_k \geq 1$ and
$$1=\|x\|=\|(P_\M T_z^*)^{n_1}T_w^{m_1}x\|=\|(P_\M T_z^*)^{n_2}T_w^{m_2}(P_\M T_z^*)^{n_1}T_w^{m_1}x\|=\dots.$$
Since all the above expansions preserve norm, then we can omit the projection and rearrange the operators to get
$$1=\|x\|=\|T_z^{*n_k}T_w^{m_k}\dots T_z^{*n_1}T_w^{m_1}x\|=\|T_z^{*n_1+\dots+n_k}T_w^{m_1+\dots+m_k}x\|.$$
However, it contradicts (\ref{vanish}) for the sequence $\{(m_1+\dots+m_k, n_1+\dots+n_k)\}$.
Similarly one can show that the diagram $J$ is bounded from the left by $M$. Thus $J$ is described by a finite sequence $\{(n_\alpha,m_\alpha)\}_{\alpha \in A}$ such that $J=\bigcup\limits_{\alpha\in A}\{(n,m)\in \Z_+^2: n\geq n_\alpha, m\geq m_\alpha\}$ where $A=\{1,2,\dots,K\}$.

\begin{tikzpicture}
[yscale=0.7, xscale=0.95, auto,
kropka/.style={draw=black!50, circle, inner sep=0pt, minimum size=2pt, fill=black!50},
kropa/.style={draw=black, circle, inner sep=0pt, minimum size=4pt, fill=black!100},
kolo/.style={draw=black, circle, inner sep=0pt, minimum size=8pt, semithick},
kwadrat/.style={draw=black, rectangle, inner sep=0pt,  minimum size=10pt, semithick},
romb/.style={draw=black, diamond, inner sep=0pt, minimum size=8pt, semithick},
trojkat/.style ={draw=black, regular polygon, regular polygon sides=3, inner sep=0pt, minimum size=8pt, semithick}]

\draw [black!50] (7,-1) -- (3,-1);
\draw [black!50] (7,0) -- (1,0);
\draw [black!50] (7,1) -- (0,1);
\draw [black!50] (7,2) -- (-2,2);
\draw [black!50] (7,3) -- (-4,3);
\draw [black!50] (7,4) -- (-4,4);
\draw [black!50] (6,-2) -- (6,5);
\draw [black!50] (5,-2) -- (5,5);
\draw [black!50] (4,-2) -- (4,5);
\draw [black!50] (3,-1) -- (3,5);
\draw [black!50] (2,-1) -- (2,5);
\draw [black!50] (1,0) -- (1,5);
\draw [black!50] (0,1) -- (0,5);
\draw [black!50] (-1,1) -- (-1,5);
\draw [black!50] (-2,2) -- (-2,5);
\draw [black!50] (-3,2) -- (-3,5);

\path (6,-1) node [kropka] {} (5,-1) node [kropka] {} (4,-1) node [kropka] {};
\path (6,0) node [kropka] {} (5,0) node [kropka] {} (4,0) node [kropka] {} (3,0) node [kropka] {} (2,0) node [kropka] {};
\path (6,1) node [kropka] {} (5,1) node [kropka] {} (4,1) node [kropka] {} (3,1) node [kropka] {} (2,1) node [kropka] {}
(1,1) node [kropka] {};
\path (6,2) node [kropka] {} (5,2) node [kropka] {} (4,2) node [kropka] {} (3,2) node [kropka] {} (2,2) node [kropka] {}
(1,2) node [kropka] {} (0,2) node [kropka] {} (-1,2) node [kropka] {};
\path (6,3) node [kropka] {} (5,3) node [kropka] {} (4,3) node [kropka] {} (3,3) node [kropka] {} (2,3) node [kropka] {}
(1,3) node [kropka] {} (0,3) node [kropka] {} (-1,3) node [kropka] {} (-2,3) node [kropka] {} (-3,3) node [kropka] {};
\path (6,4) node [kropka] {} (5,4) node [kropka] {} (4,4) node [kropka] {} (3,4) node [kropka] {} (2,4) node [kropka] {}
(1,4) node [kropka] {} (0,4) node [kropka] {} (-1,4) node [kropka] {} (-2,4) node [kropka] {} (-3,4) node [kropka] {};

\draw [black!50]  (7,-2) -- (3,-2) -- (3,-1) -- (1,-1) -- (1,0) -- (0,0) --  (0,1) -- (-2,1) -- (-2,2) -- (-4,2) -- (-4,5);

\path (6,-2) node [kropka] {}  ++(-1,0) node [kropka]  {}
        ++(-1,0) node [kropka] {} ++(-1,0) node [kropa] {}
        ++(0,1) node [kropka] {} ++(-1,0) node [kropka] {}
        ++(-1,0) node [kropa] {} ++(0,1) node [kropka] {}
        ++(-1,0) node [kropa] {}  ++(0,1) node [kropka] {}
        ++(-1,0) node [kropka] {}  ++(-1,0) node [kropa] {}
        ++(0,1) node [kropka] {} ++(-1,0) node [kropka] {} ++(-1,0) node [kropa] {}
        ++(0,1) node [kropka] {} ++(0,1) node [kropka] {};
{\tiny
 \node at (-4.9,0.6) {\begin{rotate}{45}$(m_5,n_5)$\end{rotate}};
 \node at (-2.9,-0.4) {\begin{rotate}{45}$(m_4,n_4)$\end{rotate}};
  \node at (-0.9,-1.3) {\begin{rotate}{45}$(m_3,n_3)$\end{rotate}};
  \node at (0.1,-2.3) {\begin{rotate}{45}$(m_2,n_2)$\end{rotate}};
  \node at (2.1,-3.3) {\begin{rotate}{45}$(m_1,n_1)$\end{rotate}};}
\node at (-4,5.2) {\tiny{M}};
\node at (-3,5.2) {\tiny{M+1}};
\node at (-2,5.2) {\tiny{M+2}};
\node at (-1,5.2) {\tiny{M+3}};
\node at (0,5.2) {\tiny{M+4}};
\node at (1,5.2) {\tiny{M+5}};
\node at (2,5.2) {\tiny{M+6}};
\node at (3,5.2) {\tiny{M+7}};
\node at (4,5.2) {\tiny{M+8}};
\node at (5,5.2) {\tiny{M+9}};
\node at (6,5.2) {\tiny{M+10}};
\node at (7,5.2) {\tiny{i}};

\node at (7.4, 5) {\tiny{j}};
\node at (7.4, 4) {\tiny{N+6}};
\node at (7.4, 3) {\tiny{N+5}};
\node at (7.4, 2) {\tiny{N+4}};
\node at (7.4, 1) {\tiny{N+3}};
\node at (7.4, 0) {\tiny{N+2}};
\node at (7.4, -1) {\tiny{N+1}};
\node at (7.4, -2) {\tiny{N}};

\end{tikzpicture}


Let us denote $\M_{\alpha}:=\Span{f_{i,j}: i\geq m_\alpha, j\geq n_\alpha}$ for $\alpha\in A$.
 Any pair $(T_w|_{\M_\alpha}, T_z|_{\M_\alpha})$ is doubly commuting. Thus, by Theorem \ref{Man}, there is an inner function $\phi_\alpha$ such that $\M_\alpha=\phi_\alpha \h^2(\T^2)$. On the other hand $\M_\alpha=\bigoplus\limits_{\substack{m=0\\n=0}}^\infty T_w^mT_z^n(\Cplx f_{m_\alpha, n_\alpha})$ by \cite{Sl}. Therefore $f_{m_\alpha, n_\alpha}=\phi_\alpha$ for any $\alpha\in A$.
The case $\#A = 1$ is a pair of doubly commuting unilateral shifts and was already considered. If $\#A > 1$, then $n_K>n_1$ and $m_1>m_K$. Note that $$T_z^{n_K-n_1}\M_1=T_w^{m_1-m_K}\M_K=T_z^{n_K-n_\alpha}T_w^{m_1-m_\alpha}\M_\alpha$$
for any $\alpha\in A$. Moreover, $$f_{m_1,n_K}=w^{m_1-m_\alpha}z^{n_K-n_\alpha}\phi_\alpha$$
for any $\alpha\in A$. In particular $$z^{n_K-n_1}\phi_1=w^{m_1-m_K}\phi_K.$$ Comparing Fourier coefficients of both sides we conclude that there exists an inner function $\phi$ such that $$w^{m_1-m_K}\phi=\phi_1\quad z^{n_K-n_1}\phi=\phi_K.$$
Finally, $f_{m_\alpha,n_\alpha}=\phi_\alpha=w^{m_\alpha-m_K}z^{n_\alpha-m_1}\phi$ for any $\alpha\in A$  and consequently $\M=\phi \M_J$.

\end{proof}

 Invariant subspaces $\M$ of $\h^2(\T)$, such that the contractions $T_w^*|_{\h^2(\T^2) \ominus \M},\;T_z^{*}|_{\h^2(\T^2) \ominus \M}$ doubly commute, are considered in \cite{Seto2}. The condition may appeared to be in some relation with compatibility. However, the results are disjoint. Let us recall Theorem 2.1. from \cite{Seto2}.
\begin{thm}
Let $N$ be a backward shift invariant subspace of $\h^2(\T^2)$ and
$N\neq \h^2(\T^2)$. Then $T_wT_z^*=T_z^*T_w$ on $N$ holds if and only if $N$ has one of the following
forms:
\begin{itemize}
\item $N = \h^2(\T^2) \ominus \phi_w\h^2(\T^2)$;
\item  $N = \h^2(\T^2)\ominus \phi_z\h^2(\T^2)$;
\item  $N = (\h^2(\T^2) \ominus \phi_w\h^2(\T^2)) \cap (\h^2(\T^2) \ominus \phi_z\h^2(\T^2))$;
\end{itemize}
where $\{(w,z)\mapsto\phi_w(w)\}$ and $\{(w,z)\mapsto\phi_z(z)\}$ are one variable inner functions.
\end{thm}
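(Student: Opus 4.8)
The plan is as follows. Write $\M:=\h^2(\T^2)\ominus N$; since $N$ is backward shift invariant, $\M$ is invariant under $T_w,T_z$. Denote the compressions $S_w:=P_N T_w|_N$ and $S_z:=P_N T_z|_N$, so that $S_w^*=T_w^*|_N$, $S_z^*=T_z^*|_N$, and the hypothesis ``$T_wT_z^*=T_z^*T_w$ on $N$'' reads $S_wS_z^*=S_z^*S_w$, i.e. $S_w^*,S_z^*$ doubly commute. Throughout I use $\h^2(\T^2)\simeq\h^2(\T)\otimes\h^2(\T)$ with $T_w=S\otimes I$, $T_z=I\otimes S$ for $S$ the shift on $\h^2(\T)$, and write $K_\phi:=\h^2(\T)\ominus\phi\h^2(\T)$ for the one-variable model space of an inner $\phi$. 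For the easy (sufficiency) direction I would observe that the three listed subspaces are exactly the tensor products $K_{\phi_w}\otimes\h^2(\T)$, $\h^2(\T)\otimes K_{\phi_z}$ and $K_{\phi_w}\otimes K_{\phi_z}$, using $\phi_w\h^2(\T^2)=\phi_w\h^2(\T)\otimes\h^2(\T)$ and similarly for $\phi_z$. On each such product the compressions act on separate tensor legs --- e.g. $S_w=S_{\phi_w}\otimes I$ and $S_z=I\otimes S$ on $K_{\phi_w}\otimes\h^2(\T)$ --- so they doubly commute; this is a direct check.

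For necessity, first note $T_z^{*n}\to 0$ strongly, whence $S_z$ (and likewise $S_w$) is a $C_{\cdot 0}$ contraction, and that $S_z$ is an isometry exactly when $T_zN\subseteq N$, i.e. when $N$ reduces $T_z$. I would split into cases according to whether $S_w$ or $S_z$ is an isometry. Suppose $S_z$ is an isometry, so $N$ reduces $T_z=I\otimes S$. Since the shift $S$ has no nontrivial reducing subspace, the reducing subspaces of $I\otimes S$ are precisely the subspaces $\K\otimes\h^2(\T)$ with $\K\subseteq\h^2(\T)$ closed; the backward invariance of $N$ under $T_w^*=S^*\otimes I$ then forces $S^*\K\subseteq\K$, so by Beurling's Theorem~\ref{beur} (applied to the complement) $\K=K_{\phi_w}$ for a one-variable inner $\phi_w$. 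This gives $N=K_{\phi_w}\otimes\h^2(\T)=\h^2(\T^2)\ominus\phi_w\h^2(\T^2)$, the first form. The case where $S_w$ is an isometry is symmetric and yields the second form. Notice that the doubly commuting hypothesis is not even needed in these two cases --- it does its real work only in the remaining one, which already indicates that the result cannot simply be routed through Theorem~\ref{mainH}.

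It remains to treat the case where neither $S_w$ nor $S_z$ is an isometry, which must produce the third form. Here I would regard $\M$ as a forward $T_z$-invariant subspace of the $\h^2(\T)$-valued Hardy space $\h^2(\T,\h^2(\T))$ in the variable $z$ and invoke the Beurling--Lax--Halmos theorem (see \cite{NF,RR}) to write $\M=\Theta\,\h^2(\T,\mathcal F)$ for an operator valued inner function $\Theta(z)\in\B(\mathcal F,\h^2(\T))$. Then $S_z$ is the model operator with characteristic function $\Theta$, while $S_w=S\otimes I$ is multiplication by $w$ in the coefficient space, and the standing hypothesis says precisely that this coefficient multiplication doubly commutes with the $z$-model operator. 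The crux --- and the step I expect to be the main obstacle --- is to show that this intertwining rules out any genuine coupling of the two variables in $\Theta$, forcing $\Theta(z)=\phi_z(z)\,P$ with $\phi_z$ a scalar inner function and $P$ the orthogonal projection of $\h^2(\T)$ onto $\phi_w\h^2(\T)$ for a scalar inner $\phi_w$; equivalently, that $N$ splits as the tensor product $K_{\phi_w}\otimes K_{\phi_z}$. Granting this, the identity $K_{\phi_w}\otimes K_{\phi_z}=(\h^2(\T^2)\ominus\phi_w\h^2(\T^2))\cap(\h^2(\T^2)\ominus\phi_z\h^2(\T^2))$ delivers the third form and completes the proof.
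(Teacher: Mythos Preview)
The paper does not prove this theorem at all: it is merely quoted, without proof, as Theorem~2.1 from \cite{Seto2} (Izuchi--Nakazi--Seto), and is used only to contrast with Theorem~\ref{mainH}. So there is no ``paper's own proof'' to compare your proposal against.

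That said, a few remarks on your sketch. The sufficiency direction and the two isometric cases are fine and essentially routine. In the remaining case, however, you explicitly leave the main step unproved: you invoke Beurling--Lax--Halmos to get an operator-valued inner function $\Theta$, state that the doubly commuting hypothesis should force $\Theta(z)=\phi_z(z)P$ with $P$ the projection onto $\phi_w\h^2(\T)$, and then write ``Granting this\ldots''. This is precisely the heart of the theorem, and your proposal contains no argument for it --- no mechanism by which the commutation $S_wS_z^*=S_z^*S_w$ constrains the Taylor coefficients of $\Theta$ or forces the decoupling you need. As written, the proposal is a plausible outline of where the content lies, but not a proof; the original argument in \cite{Seto2} supplies exactly this missing analysis.
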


 It is clear that subspaces given by a diagram usually do not satisfy the condition of double commutativity on orthogonal complement. The following example shows that the reverse implication  may not hold as well.

\begin{ex}
Let $x = \sum_{j=0}^{\infty} \lambda^j w^j \in \h^2(\T^2)$, for some fixed $\lambda\in\Cplx$, $|\lambda|<1$. Note that $T_w^*x=\lambda x$,
$T_z^* x = 0$. Thus $N := \mathbb{C} x$ is invariant under $T^*_w, T^*_z$ and $\M := \h^2(\T^2)\ominus N$ is invariant under $T_w, T_z$. Denote
$S_w := T_w|_\M, S_z := T_z|_\M \in \B(\M)$. Since $T_z^*|_N = 0$ it doubly commute with $T^*_w|_N$.
However $S_w, S_z$ are not compatible. Indeed, let $y := T_zx \in \M$. Then $S^*_z y =
P_\M T^*_z T_zx = P_\M x = 0$ and so
$S_wS^*_wS_zS^*_z y = 0$. On the other hand $S_zS^*_zS_wS_w^*y =S_zS^*_z (\sum_{j=1}^{\infty} \lambda^j w^j z) = S_zP_\M(\sum_{j=1}^{\infty} \lambda^j w^j ) = S_zP_\M(x - 1) = -S_zP_\M 1 \not= 0$ because the constant function $1$ does not belong to $N$.
\end{ex}

\section{Invariant subspace on $L^2(\T^2)$}

In this section it will be showed that the space $L^2(\T^2)$ contains all the compatible types of invariant subspaces. Precisely, a subspace of each type described in  Theorem \ref{decomposition_c_cnc} may be represented by some invariant subspace of $L^2(\T^2)$. In fact we do not consider the completely non compatible case, but such a subspace may be easily constructed from a diagram type subspace (see Example 5.2 in \cite{BKPSLAA}). Each type is considered in a separate theorem and the results are summarized in Theorem \ref{mainL}. Moreover, a unitary extension of each type is described. Then the coexistence of respective types is investigated by the following observation.

\begin{rem}\label{KbotK}
  Let $V_1, V_2\in\B(\h)$ be a pair of commuting isometries and  $U_1,U_2\in\B(\K)$ be
  its minimal unitary extension. For any reducing decomposition $\h=\h_1\oplus\h_2$ of $V_1,V_2$ it holds $\K=\K_1\oplus \K_2$ where $ \K_i:=\bigvee_{k,l\in \Z} U_1^kU_2^l \h_i \subset \K,$ for $i=\{1,2\}$.
\end{rem}

Since a pair $L_w|_\M, L_z|_\M$ is unitary if and only if $\M$ reduces $L_w, L_z$ then the space of a unitary extension is equal $\chi_\Delta L^2(\T^2)$ for some Borel set $\Delta\subset \T^2$ (\cite{GM}, Lemma 3). Spaces $\chi_\Delta L^2(\T^2)$ are orthogonal if the respective Borel sets are almost disjoint (their common part is of the measure zero).
\begin{thm}\label{diagramL}
Let $\M\not=\{0\}$ be an invariant subspace of $L^2(\T^2)$. The pair $L_w|_{\M},L_z|_{\M}$ is given by a diagram if and only if $$\M=\psi \M_J,$$
where $\M_J:=\Span{w^iz^j:(i,j)\in J}$ and $\{(w,z)\mapsto\psi(w,z)\}$ is a unimodular function. Moreover, the space of a minimal unitary extension of $(L_w|_{\M},L_z|_{\M})$  equals to $L^2(\T^2)$.
\end{thm}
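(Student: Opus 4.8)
The plan is to mimic the structure of the proof of Theorem~\ref{mainH}, but now working inside $L^2(\T^2)$ rather than $\h^2(\T^2)$, and replacing Beurling's theorem / Mandrekar's theorem by Helson's description of purely invariant subspaces of $L^2(\T)$ together with the diagram decomposition of Theorem~\ref{decomposition_c_cnc}. One direction is immediate: if $\M=\psi\M_J$ with $\psi$ unimodular, then $f\mapsto\psi f$ is a unitary of $L^2(\T^2)$ carrying $\M_J$ onto $\M$ and intertwining $L_w,L_z$ with themselves, so $(L_w|_\M,L_z|_\M)$ is unitarily equivalent to $(L_w|_{\M_J},L_z|_{\M_J})$, i.e. a pair given by the diagram $J$ (Definition~\ref{defdiagram}), and since any unitary extension of a simple pair has ambient space all of $L^2(\T^2)$, the minimal unitary extension of $(L_w|_\M,L_z|_\M)$ has space $L^2(\T^2)$ as well.

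For the converse, assume $(L_w|_\M,L_z|_\M)$ is given by a diagram $J\subset\Z^2$; since it sits inside $L^2(\T^2)$ it must be a simple pair (as remarked after the second definition of ``pair given by a diagram''), so it is unitarily equivalent to $(L_w|_{\M_J},L_z|_{\M_J})$ via some unitary $W\colon\M_J\to\M$ with $WL_w=L_wW$, $WL_z=L_zW$. Write $\psi:=W\mathbf 1_{i_0,j_0}$ for a fixed ``corner-type'' generator $w^{i_0}z^{j_0}$ of $\M_J$ (or, if $J$ has no corner, work with a translation-normalized generator as in the $\h^2$ proof). The intertwining relations force $W(w^iz^j\cdot g)=w^iz^j\cdot Wg$ whenever multiplication by $w^iz^j$ and by $w^{-i}z^{-j}$ both preserve the relevant subspaces, and in particular, along any ``row'' or ``column'' of $J$ the image $W(w^iz^j)$ equals $w^{i-i_0}z^{j-j_0}\psi$. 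The key point is then that $W$ is norm preserving on each such one-variable slice, which shows $|\psi|$ is constant a.e.\ on that slice; running over a cofinal family of slices of $J$ (whose union meets a.e.\ point of $\T^2$, using that $\|\cdot\|_{L^2}$-preservation on $\bigoplus_{i,j\ge0}w^iz^j\cdot\Cplx g$ forces $|g|=1$ a.e., exactly as in the $\h^2$ computation $\M_\alpha=\phi_\alpha\h^2(\T^2)$) gives $|\psi|=1$ a.e., i.e.\ $\psi$ is unimodular, and then $W$ is precisely $f\mapsto\psi f$ on $\M_J$, hence $\M=\psi\M_J$.

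Concretely I would organize the converse as: (i) reduce to a simple pair and fix the intertwiner $W$; (ii) decompose $J$ as in the $\h^2$ proof into the ``staircase'' of quarter-planes $\{(i,j):i\ge m_\alpha,\ j\ge n_\alpha\}$ — here $J$ need not be bounded below or to the left, but each quarter-plane piece $\M_\alpha:=\Span{w^iz^j:i\ge m_\alpha,j\ge n_\alpha}$ is a purely invariant subspace of $L^2(\T^2)$ on which the restricted pair is a pair of doubly commuting unilateral shifts, so $W(\M_\alpha)=L^2(\T^2)$-purely-invariant and by the Helson-type description $W(\M_\alpha)=\psi_\alpha\M_{\Z_+^2+(m_\alpha,n_\alpha)}$ for a unimodular $\psi_\alpha$, with $W(w^{m_\alpha}z^{n_\alpha})=\psi_\alpha$; (iii) glue the $\psi_\alpha$ exactly as in the last paragraph of the proof of Theorem~\ref{mainH}, comparing on overlaps $T_z^{n_K-n_\alpha}T_w^{m_1-m_\alpha}\M_\alpha$ and using Fourier coefficients to produce a single unimodular $\psi$ with $\psi_\alpha=w^{m_\alpha-m_K}z^{n_\alpha-n_1}\psi$, whence $\M=W(\M_J)=\psi\M_J$; (iv) for the infinite/unbounded diagram case, pass to the limit over an exhausting sequence of such staircases, noting the $\psi$ produced at each finite stage is consistent with the previous one. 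The main obstacle I anticipate is precisely step (iv): ensuring the compatibility of the $\psi_\alpha$ over an \emph{infinite} cofinal family of quarter-plane slices and arguing that their common unimodular extension is well defined a.e.\ on $\T^2$ (rather than only on the union of finitely many slices); this requires checking that the gluing cocycle is trivial, which should follow because on each overlap the two candidate unimodular functions differ by a monomial $w^az^b$ with $a,b\in\Z$ forced to be $0$ after comparing Fourier supports, exactly the mechanism used in the Hardy-space case.
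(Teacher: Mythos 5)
Your outline matches the paper's proof in its skeleton: the easy direction is handled identically, and for the converse both you and the authors note the pair must be simple, decompose the diagram $J$ along its corners $(m_\alpha,n_\alpha)$ into quarter-plane pieces $E_{m_\alpha,n_\alpha}=\Span{e_{i,j}: i\ge m_\alpha,\ j\ge n_\alpha}$ on which the restrictions doubly commute, apply the Ghatage--Mandrekar description to get $E_{m_\alpha,n_\alpha}=\psi_\alpha\h^2(\T^2)$ with $\psi_\alpha$ unimodular and $e_{m_\alpha,n_\alpha}=\psi_\alpha$, and then glue the $\psi_\alpha$ into a single $\psi$. The difference lies in the gluing, and it matters. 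The paper never imports the Hardy-space Fourier-coefficient divisibility argument: since $\overline{w}$ and $\overline{z}$ are themselves unimodular, the overlap identity $e_{m_\alpha,n_\beta}=w^{m_\alpha-m_\beta}\psi_\beta=z^{n_\beta-n_\alpha}\psi_\alpha$ shows at once that $\psi:=\overline{w}^{\,m_\alpha}\overline{z}^{\,n_\alpha}\psi_\alpha$ does not depend on $\alpha$. There is no cocycle to trivialize, and the case of infinitely many corners --- your step (iv), which you single out as the main obstacle --- needs no exhaustion or limiting argument at all.

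By contrast, the mechanism you propose for step (iv), namely that the monomial discrepancy $w^az^b$ between candidates on an overlap is ``forced to be $0$ after comparing Fourier supports, exactly as in the Hardy-space case,'' is precisely the one ingredient of the proof of Theorem \ref{mainH} that does \emph{not} transfer to $L^2(\T^2)$: a unimodular function on $\T^2$ has no constrained Fourier support, so nothing forces $a=b=0$, and with your stage-dependent normalization (anchored at the extremal corners $(m_1,n_1)$, $(m_K,n_K)$ of each finite staircase, which move as the staircase grows) the successive candidates genuinely differ by nontrivial monomials. The repair is to anchor at one fixed corner and absorb the monomial into $\psi$ using that $\overline{w},\overline{z}$ are unimodular --- which is exactly the paper's normalization. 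With that change your argument closes; as written, step (iv) would not.
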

\begin{proof}
 Let $(L_w|_{\M},L_z|_{\M})$ be given by a diagram $J \in \Z^2$ and $\{e_{i,j}\}_{(i,j)\in J}$ be the underlying basis of $\M$. Precisely $e_{i,j}\simeq w^iz^j$, where $\simeq$ is a unitary equivalence as in Definition \ref{defdiagram}. Then $\M=\bigoplus_{(i,j)\in J}\mathbb{C}e_{i,j}$ and $L_w e_{i,j}=e_{i+1,j}, L_ze_{i,j}=e_{i,j+1}$. Note that the set 
 $\{(n,m)\in J: (n-1,m)\not\in J \textnormal{ and } (n,m-1)\not\in J\}$ can be ordered in a sequence $(n_\alpha,m_\alpha)_{\alpha\in A}$ such that $n_{\alpha+1}>n_\alpha$ and $m_{\alpha+1}<m_\alpha$. The idea is explained in the picture. Obviously the sequence may be bounded or unbounded on each side.

  \begin{tikzpicture}
[yscale=0.7, xscale=0.95, auto,
kropka/.style={draw=black!50, circle, inner sep=0pt, minimum size=2pt, fill=black!50},
kropa/.style={draw=black, circle, inner sep=0pt, minimum size=4pt, fill=black!100},
kolo/.style={draw=black, circle, inner sep=0pt, minimum size=8pt, semithick},
kwadrat/.style={draw=black, rectangle, inner sep=0pt,  minimum size=10pt, semithick},
romb/.style={draw=black, diamond, inner sep=0pt, minimum size=8pt, semithick},
trojkat/.style ={draw=black, regular polygon, regular polygon sides=3, inner sep=0pt, minimum size=8pt, semithick}]
\draw [black!50] (9.5,-4) -- (9,-4);
\draw [black!50] (9.5,-3) -- (6,-3);
\draw [black!50] (9.5,-1) -- (3,-1);
\draw [black!50] (9.5,0) -- (1,0);
\draw [black!50] (9.5,1) -- (0,1);
\draw [black!50] (9.5,2) -- (-2,2);
\draw [black!50] (9.5,3) -- (-5,3);
\draw [black!50] (9.5,4) -- (-5.5,4);

\draw [black!50] (9,-4.5) -- (9,5);
\draw [black!50] (8,-3) -- (8,5);
\draw [black!50] (7,-3) -- (7,5);
\draw [black!50] (6,-3) -- (6,5);
\draw [black!50] (5,-2) -- (5,5);
\draw [black!50] (4,-2) -- (4,5);
\draw [black!50] (3,-1) -- (3,5);
\draw [black!50] (2,-1) -- (2,5);
\draw [black!50] (1,0) -- (1,5);
\draw [black!50] (0,1) -- (0,5);
\draw [black!50] (-1,1) -- (-1,5);
\draw [black!50] (-2,2) -- (-2,5);
\draw [black!50] (-3,2) -- (-3,5);
\draw [black!50] (-5,3) -- (-5,5);

\path (9,-1) node [kropka] {}(8,-1) node [kropka] {}(7,-1) node [kropka] {}(6,-1) node [kropka] {} (5,-1) node [kropka] {} (4,-1) node [kropka] {};
\path (9,0) node [kropka] {}(8,0) node [kropka] {}(7,0) node [kropka] {}(6,0) node [kropka] {} (5,0) node [kropka] {} (4,0) node [kropka] {} (3,0) node [kropka] {} (2,0) node [kropka] {};
\path (9,1) node [kropka] {}(8,1) node [kropka] {}(7,1) node [kropka] {}(6,1) node [kropka] {} (5,1) node [kropka] {} (4,1) node [kropka] {} (3,1) node [kropka] {} (2,1) node [kropka] {}
(1,1) node [kropka] {};
\path (9,2) node [kropka] {}(8,2) node [kropka] {}(7,2) node [kropka] {}(6,2) node [kropka] {} (5,2) node [kropka] {} (4,2) node [kropka] {} (3,2) node [kropka] {} (2,2) node [kropka] {}
(1,2) node [kropka] {} (0,2) node [kropka] {} (-1,2) node [kropka] {};
\path (9,3) node [kropka] {}(8,3) node [kropka] {}(7,3) node [kropka] {}(6,3) node [kropka] {} (5,3) node [kropka] {} (4,3) node [kropka] {} (3,3) node [kropka] {} (2,3) node [kropka] {}
(1,3) node [kropka] {} (0,3) node [kropka] {} (-1,3) node [kropka] {} (-2,3) node [kropka] {} (-3,3) node [kropka] {}(-5,3) node [kropa] {};
\path (9,4) node [kropka] {}(8,4) node [kropka] {}(7,4) node [kropka] {}(6,4) node [kropka] {} (5,4) node [kropka] {} (4,4) node [kropka] {} (3,4) node [kropka] {} (2,4) node [kropka] {}
(1,4) node [kropka] {} (0,4) node [kropka] {} (-1,4) node [kropka] {} (-2,4) node [kropka] {} (-3,4) node [kropka] {}(-5,4) node [kropka] {};
\path (9,-2) node [kropka]{}(8,-2) node [kropka]{}(7,-2) node [kropka]{};
\path (9,-3) node [kropka]{}(8,-3) node [kropka]{} (7,-3) node [kropka]{}(6,-3) node [kropa]{};
\path (9,-4) node [kropka]{};
\draw [black!50]  (9.5,-2) -- (3,-2) -- (3,-1) -- (1,-1) -- (1,0) -- (0,0) --  (0,1) -- (-2,1) -- (-2,2) -- (-4,2) -- (-4,5);

\path (6,-2) node [kropka] {}  ++(-1,0) node [kropka]  {}
        ++(-1,0) node [kropka] {} ++(-1,0) node [kropa] {}
        ++(0,1) node [kropka] {} ++(-1,0) node [kropka] {}
        ++(-1,0) node [kropa] {} ++(0,1) node [kropka] {}
        ++(-1,0) node [kropa] {}  ++(0,1) node [kropka] {}
        ++(-1,0) node [kropka] {}  ++(-1,0) node [kropa] {}
        ++(0,1) node [kropka] {} ++(-1,0) node [kropka] {} ++(-1,0) node [kropa] {}
        ++(0,1) node [kropka] {} ++(0,1) node [kropka] {};
{\tiny
\node at (-5.4,2.6) {\begin{rotate}{-30}$\dots$\end{rotate}};
 \node at (-4.7,0.6) {\begin{rotate}{55}$(m_2,n_2)$\end{rotate}};
 \node at (-2.7,-0.4) {\begin{rotate}{55}$(m_1,n_1)$\end{rotate}};
  \node at (-0.7,-1.5) {\begin{rotate}{55}$(m_0,n_0)$\end{rotate}};
  \node at (0.1,-2.9) {\begin{rotate}{55}$(m_{-1},n_{-1})$\end{rotate}};
  \node at (2.1,-3.9) {\begin{rotate}{55}$(m_{-2},n_{-2})$\end{rotate}};}
\node at (5.1,-3.9) {\begin{rotate}{-27}$\dots$\end{rotate}};
\end{tikzpicture}

 Let us denote subspaces $E_{m_\alpha,n_\alpha}:=\bigoplus_{\substack{i\ge m_\alpha\\ j\ge n_\alpha}}\mathbb{C}e_{i,j}$. Since $J=\bigcup_{\alpha\in A}(n_\alpha,m_\alpha)+\Z_+^2$ then $\M=\bigvee_{\alpha\in A}E_{m_\alpha, n_\alpha}$.  Note that $(L_w|_{E_{m_\alpha,n_\alpha}},L_z|_{E_{m_\alpha,n_\alpha}})$ are doubly commuting unilateral shifts. Therefore $E_{m_\alpha,n_\alpha}=\psi_\alpha \h^2(\T^2)$ for a unimodular function $\psi_\alpha$ (\cite{GM}, Corollary 4). Hence $e_{m_\alpha,n_\alpha}=\psi_\alpha$ and $e_{m_\alpha,n_{\beta}}=w^{m_\alpha-m_{\beta}}\psi_{\beta}=z^{n_{\beta}-n_\alpha}\psi_\alpha$ for any $\alpha, \beta\in A$. Consequently $w^{m_\beta-m_\alpha}\overline{z}^{(n_\beta-n_\alpha)}\psi_\alpha = \psi_\beta$ so  $\psi:=\overline{w}^{m_\alpha}\overline{z}^{n_\alpha}\psi_\alpha$  do not depend on the choice of $\alpha$. Eventually, $E_{m_\alpha,n_\alpha}=\psi w^{m_\alpha}z^{n_\alpha}\h^2(\T^2)$ and consequently $\M = \psi\h_J$.

 Since the space of a minimal unitary extension of $L_w|_{\M_J}, L_z|_{\M_J}$ equals to $L^2(\T^2)$ then the space of a minimal unitary extension of $L_w|_{\psi\M_J}, L_z|_{\psi\M_J}$ equals to $\psi L^2(\T^2)=L^2(\T^2)$.
 \end{proof}
Note that for a diagram $\mathbb{Z}_+\times\mathbb{Z}$ by the model in \cite{BCL} we get $\M\simeq \h^2(\T)\otimes L^2(\T)$. If we identify $L^2(\T^2)$ with $L^2(\T)\otimes L^2(\T)$ then $\M=\M_w\otimes\M_z$ where $\M_w\simeq \h^2(\T)$ and $\M_z\simeq L^2(\T)$ and $\M_w, \M_z$ are regarded as subspaces of $L^2(\T)$ spaces. Thus we get $\M=\psi_w\h^2(\T)\otimes L^2(\T)$ and $\psi(w,z)=\psi_w(w)\otimes 1$ where $\psi_w\in L^2(\T)$ is a unimodular function and $\psi\in L^2(\T^2)$ is as in Theorem \ref{diagramL}. Let us show this result more generally.

\begin{thm}\label{mainusL}
Let $\M\not=\{0\}$ be an invariant subspace of $L^2(\T^2)$. Operator $L_w|_{\M}$ is a unilateral shift and $L_z|_{\M}$ is a unitary operator if and only if
$$\M=\psi_w\h^2(\T)\otimes \chi_\delta L^2(\T),$$
where $\delta \subset\T$ is a Borel set, $\psi_w\in L^2(\T)$ is a unimodular function of variable $w$.

 Moreover, the space of a minimal unitary extension of $L_w|_{\M}, L_z|_{\M}$ equals to $\chi_{\T\times\delta}L^2(\T^2)$.
\end{thm}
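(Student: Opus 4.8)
The plan is to analyze the Wold-type structure of the pair $(L_w|_\M, L_z|_\M)$ under the hypothesis that $L_w|_\M$ is a unilateral shift and $L_z|_\M$ is unitary, and then to exploit the tensor factorization of $L^2(\T^2)\simeq L^2(\T)\otimes L^2(\T)$. First I would observe that since $L_z|_\M$ is unitary, $\M$ is invariant (indeed reducing) under $L_z$, so $\M$ is a $z$-reducing subspace; combined with $L_w$-invariance this places us in the situation of a pair given by the simple diagram $\Z_+\times\Z$ (the operator $L_w$ being a shift, $L_z$ unitary, and the pair doubly commuting since $L_z|_\M$ is unitary). Then Theorem \ref{diagramL} already gives $\M=\psi\M_J$ with $J=\Z_+\times\Z$ and $\psi$ unimodular, so $\M=\psi\cdot(\h^2(\T)\otimes L^2(\T))$ after identifying $\M_{\Z_+\times\Z}$ with $\h^2(\T)\otimes L^2(\T)$ via the model of \cite{BCL}. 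The remaining work is to refine the unimodular $\psi$ into the stated form $\psi_w\otimes\chi_\delta$-type description.

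Next I would pin down the structure more explicitly using the Helson-type results already quoted. Regard $\M$ as a subspace of $L^2(\T,L^2(\T))$ via $L^2(\T^2)\simeq L^2(\T)\otimes L^2(\T)$, with the \emph{first} circle carrying the variable $w$ and the \emph{second} carrying $z$; here $L_w$ acts as the shift/multiplication on $L^2(\T)$-part and $L_z$ acts only on the second factor. Since $L_w|_\M$ is a unilateral shift and $L_z|_\M$ is a unitary commuting with it, $\M$ has the Wold form $\M=\bigoplus_{k\ge 0}L_w^k \W$ where $\W:=\M\ominus L_w\M=\ker(L_w|_\M)^*$ is the wandering subspace, and $\W$ reduces $L_z|_\M$. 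The multiplicity (Hilbert dimension of $\W$) is then some cardinal; the point is to identify $\W$ as a subspace of the "constant in $w$" slice. Concretely, by the vector-valued Beurling--Lax theorem $\M=\Theta\,\h^2(\T)\otimes K$ for a coefficient Hilbert space $K$ and an inner operator function $\Theta(w)$; the constraint that $L_z$ (acting on the values) restricts to a unitary on $\M$ and doubly commutes with $L_w|_\M$ forces $\Theta(w)$ to intertwine appropriately and, using that the ambient unitary extension is exactly $\chi_{\T\times\delta}L^2(\T^2)$, forces $K\simeq \chi_\delta L^2(\T)$ with $\delta\subset\T$ a Borel set and $\Theta(w)=\psi_w(w)\,I$ a scalar inner (hence unimodular, as a map into $L^2$) function of $w$ alone. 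This yields $\M=\psi_w\h^2(\T)\otimes\chi_\delta L^2(\T)$.

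For the converse direction and the statement about the unitary extension I would argue directly. If $\M=\psi_w\h^2(\T)\otimes\chi_\delta L^2(\T)$ with $\psi_w$ unimodular and $\delta\subset\T$ Borel, then $L_z|_\M$ acts as $I\otimes(L_z|_{\chi_\delta L^2(\T)})$, and $L_z|_{\chi_\delta L^2(\T)}$ is unitary by Helson's theorem (a reducing subspace of $L^2(\T)$); meanwhile $L_w|_\M$ acts as $(L_w|_{\psi_w\h^2(\T)})\otimes I$, and $L_w|_{\psi_w\h^2(\T)}$ is a unilateral shift (of multiplicity one) since $\psi_w\h^2(\T)$ is purely invariant in $L^2(\T)$. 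Tensoring a unilateral shift with a nonzero identity gives a unilateral shift, so the pair has the required type. For the minimal unitary extension, the minimal unitary extension of $L_w|_{\psi_w\h^2(\T)}$ inside $L^2(\T)$ is $L_w|_{\psi_w L^2(\T)}=L_w|_{L^2(\T)}$ (bilateral shift on all of $L^2(\T)$), so by Remark \ref{KbotK} (applied to the tensor decomposition, or directly) the minimal unitary extension of $(L_w|_\M,L_z|_\M)$ lives on $L^2(\T)\otimes\chi_\delta L^2(\T)=\chi_{\T\times\delta}L^2(\T^2)$, as claimed.

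The main obstacle I expect is the rigidity step in the second paragraph: showing that the inner operator function $\Theta(w)$ coming from Beurling--Lax must degenerate to a \emph{scalar} function $\psi_w(w)$ of $w$ alone, with no genuine dependence of its "values" on $z$. The natural route is to use double commutativity of $L_w|_\M$ with the unitary $L_z|_\M$ together with the fact (from the diagram analysis / Theorem \ref{diagramL}) that $\M$ is already known to equal $\psi\M_{\Z_+\times\Z}$ for a single unimodular $\psi\in L^2(\T^2)$, and then to show that such a $\psi$ can be chosen of the product form $\psi_w(w)$ (modulo absorbing any $z$-part into the $\chi_\delta L^2(\T)$ factor) by comparing Fourier coefficients in $z$ exactly as in the proof of Theorem \ref{diagramL}; the wandering-vector computation $e_{0,n}=z^n e_{0,0}$ together with $e_{0,0}=\psi$ forces $\psi=\psi_w(w)$ to be independent of $z$ on the support, with the support in $z$ being precisely $\delta$.
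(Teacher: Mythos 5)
There is a genuine gap in your forward direction. Your first paragraph claims that, because $L_w|_\M$ is a unilateral shift and $L_z|_\M$ is unitary (so the pair doubly commutes), the pair is ``given by the simple diagram $\Z_+\times\Z$'' and hence Theorem \ref{diagramL} yields $\M=\psi\M_{\Z_+\times\Z}$. This is false: in the model for a pair given by the diagram $\Z_+\times\Z$, the operator $L_z$ acts as a \emph{bilateral shift} (of multiplicity one on the wandering subspace of $L_w$), not as an arbitrary unitary. The conclusion of the very theorem you are proving supplies a counterexample: for a Borel set $\delta\subsetneq\T$ with $0<m(\delta)<1$, the space $\M=\h^2(\T)\otimes\chi_\delta L^2(\T)$ satisfies the hypotheses, yet $L_z|_\M$ has spectral measure $\chi_\delta\,dm$ and is therefore not a bilateral shift; since multiplication by a unimodular $\psi$ is a unitary commuting with $L_z$, such an $\M$ cannot equal $\psi\M_{\Z_+\times\Z}$ for any $\psi$. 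Because your fourth paragraph resolves the ``rigidity step'' precisely by appealing to the identity $\M=\psi\M_{\Z_+\times\Z}$ and to the wandering-vector computation $e_{0,n}=z^ne_{0,0}$ that presupposes a one-dimensional diagram structure, the key step of the forward implication is not established. The appeal in your second paragraph to ``the ambient unitary extension is exactly $\chi_{\T\times\delta}L^2(\T^2)$'' is also circular, since that is part of what must be proved.

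The workable route --- and the one the paper takes --- starts from the sound observations already present in your second paragraph and stays with them: $\M=\bigoplus_{k\ge0}L_w^k\mathcal{W}$ with $\mathcal{W}=\ker(L_w|_\M)^*$ reducing $L_z$; the Berger--Coburn--Lebow/Popovici model then identifies $(L_w|_\M,L_z|_\M)$ with $(T_w\otimes I,\,I\otimes U)$ on $\h^2(\T)\otimes K$ and lets one write $\M=\M_w\otimes\M_z$, where $\M_w\subset L^2(\T)$ is purely invariant under $\tilde L_w$ and $\M_z\subset L^2(\T)$ reduces $\tilde L_z$; Helson's one-variable results then give $\M_w=\psi_w\h^2(\T)$ and $\M_z=\chi_\delta L^2(\T)$ separately, with no operator-valued Beurling--Lax rigidity argument needed. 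Your converse direction and your computation of the minimal unitary extension are correct and agree with the paper's.
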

\begin{proof}
It is convenient to consider operators on the space $L^2(\T)\otimes L^2(\T)$. Then $L_w= \tilde{L}_w\otimes I, L_z= I\otimes \tilde{L}_z$ where $\tilde{L}_w, \tilde{L}_z$ denotes the multiplication operators on respective $L^2(\T)$ spaces. Since $\M=\bigoplus_{i\ge 0}L_w^i\ker (L_w|_{\M})^*$ where each $L_w^i\ker (L_w|_{\M})^*$ is reducing under $L_z$ (Theorem 3.5 in \cite{BKPSLAA}) we may put $\M=\M_w\otimes \M_z$. In details, from the model in Theorem 3.1 in \cite{BCL} (see also Theorem 4.2 in  \cite{Pop}) it follows  $(L_w|_{\M}, L_z|_{\M})  \simeq (T_w\otimes I,I\otimes U)\in\B(\h^2(\T)\otimes K)$ where $K\simeq(\ker L_w|_{\M})^*$.  Note that $\h^2(\T)$ denotes a model space, not the precise Hardy subspace of $L^2(\T)$ in the considered tensor product. Thus $\tilde{L}_w|_{\M_w}$ is a unilateral shift and $\tilde{L}_z|_{\M_z}$ is a unitary operator.  Since $(\tilde{L}_z|_{\M_z})^*=P_{\M_z}\tilde{L}_z^*|_{\M_z}$ then $\tilde{L}_z|_{\M_z}$ is unitary if and only if $M_z$ reduces $\tilde{L}_z$. Thus, the result of Helson yields $\M_z=\chi_\delta L^2(\T)$  for some Borel set $\delta\subset\T$. Similarly, $\tilde{L}_w|_{M_w}$ is a unilateral shift (completely non unitary isometry) if $\M_w$ is purely invariant. Thus $\M_w=\psi_w \h^2(\T)$ for some unimodular function $\psi_w$. So, $\M=\psi_w\h^2(\T)\otimes \chi_\delta(z) L^2(\T)$.

Obviously the space of a unitary extension of $L_w|_{\M}, L_z|_{\M}$ is $L^2(\T)\otimes \chi_\delta(z) L^2(\T)\subset L^2(\T)\otimes L^2(\T)$ which is equivalent to $\chi_{\T\times\delta}L^2(\T^2)\subset L^2(\T^2)$.
\end{proof}
In $L^2(\T^2)$, unlike in $\h^2(\T^2)$, there may exist invariant subspaces where $L_w, L_z$ are generalized powers. Recall that generalized powers are defined by a unitary operator having a star cyclic vector.

\begin{rem}\label{cyclic}
By the  result of Helson, each subspace of $L^2(\T)$ reducing under $L_z$ is of the form $\M=\chi_\gamma L^2(\T)$ for some Borel set $\gamma\subset\T$. In other words, the operator of multiplication by $\chi_\gamma$ is equal to $P_{\M}$. Since $\M$ reduces $L_z$ then $P_\M$ commutes with $L_z$. Thus $\Span{L^n_z(\chi_\gamma \mathbf{1}):n\in\mathbb{Z}}=\Span{L^n_zP_\M \mathbf{1}:n\in\mathbb{Z}}=P_\M\Span{z^n:n\in\mathbb{Z}}=P_\M L^2(\T)=\M$. Concluding, any reducing subspace of
$L^2(\T)$ has a star cyclic vector $\chi_\gamma \mathbf{1}$. In fact for any proper subspace, there is a cyclic vector.\end{rem}

By the remark above any unitary part of a bilateral shift of multiplicity one may define generalized powers.
Since generalized powers are unilateral shifts, then their unitary extensions are bilateral shifts. In the following example such an extension is a proper subspace of $L^2(\T^2)$. It follows an interesting observation, that there are proper subspaces of $L^2(\T^2)$ reducing $L_w,L_z$ to bilateral shifts. Recall that a vector $x$ is wandering for a pair  $L_w, L_z$ if $L_w^iL_z^jx\perp L_w^{i'}L_z^{j'}x$ whenever $(i,j)\ne(i',j')$ for $(i,j),(i',j')\in\mathbb{Z}_+^2$. Every vector wandering for a pair generates a subspace equivalent to $\h^2(\T^2)$ and a unitary extension acts on $L^2(\T^2)$. Thus, a proper subspace of $L^2(\T^2)$ reducing under $L_w, L_z$ may not contain any vector wandering for a pair. In the following example $L_w|_\h, L_z|_\h$ are bilateral shifts, but $\h$ does not contain any vector wandering for the pair.

\begin{ex}\label{e1}
  Let $L_{w\bar z}$ denotes the operator of multiplication by $w\bar{z}$ and $H_i:=\bigvee\{w^kz^{i-k}:k\in\mathbb{Z}\}$ for $i\in\mathbb{Z}$. Note that $L^2(\T^2)=\bigoplus_{i\in\mathbb{Z}}H_i$. Let us show that $L_wH_i=L_zH_i=H_{i+1}$.
For $L_w$ it follows from the equality $L_w(w^kz^{i-k})=w^{k+1}z^{i-k}=w^{k+1}z^{i+1-(k+1)}$ valid for every $k\in\mathbb{Z}$. In fact $L_w$ establishes a unitary equivalence between $H_i$ and $H_{i+1}$.
Moreover, since $L_{w\bar z}w^kz^{i-k}=w^{k+1}z^{i-k-1}=w^{k+1}z^{i-(k+1)}$ the space $H_i$ reduces $L_{w\bar z}$ to a bilateral shift of multiplicity $1$ for every $i$. Thus $H_i=L_{w\bar z}H_i=L^*_{w\bar z}H_i$. On the other hand, $L_z=L_{w\bar wz}=L_wL_{\bar wz}=L_wL^*_{w\bar z}$. Hence $L_zH_i=L_wL^*_{w\bar z}H_i=L_wH_i=H_{i+1}$.

Let $\Ll_0\oplus\h_0 = H_0$ be a proper decomposition reducing the unitary operator $L_{w\bar z}$. Let us define $\h_i=L_w^i\h_0$ for $i\in\mathbb{Z}_+$.
Note that $\h_i\subset L_w^iH_0=H_i$ and consequently we can define $\h_+:=\bigoplus_{i\in\mathbb{Z}_+} \h_i$ which is invariant under $L_w$ and $L_w|_{\h_+}$ is a unilateral shift. Since $\h_0$ reduces $L_{w\bar z}$ it holds $\h_0=L^{*i}_{w\bar z}\h_0$ for any $i\in\mathbb{Z}$. On the other hand $L_z^i=L^i_wL^{*i}_{w\bar z}$ and consequently $\h_i=L_w^i\mathcal{H}_0=L^i_zL^{*i}_{w\bar z}\mathcal{H}_0=L^i_z\mathcal{H}_0$. Thus $\mathcal{H}_+$ is a subspace invariant under $L_z$ and $L_w$ where the operators are unilateral shifts. By Remark \ref{cyclic} operator $\mathcal{U}=L_{w\bar z}|_{\h_0}$ have a star cyclic vector. Let $J=\bigcup_{k\in\mathbb{Z}}J_0+k(1,-1)$ where $J_0=\{0\}\times\mathbb{Z}_+$. By Lemma \ref{gplemma} operators $L_w|_{\h_+},L_z|_{\h_+}$ are generalized powers given by $J$ and $\mathcal{U}$.

Extending definition of $\h_i$ for negative $k$ as powers of the adjoint we get $\h:=\bigoplus_{i\in\mathbb{Z}} \h_i$ which reduces $L_w, L_z$ to bilateral shifts. Similarly $\Ll:=\bigoplus_{i\in\mathbb{Z}} \Ll_i$, where $\Ll_i=L^i_w \Ll_0$ reduces $L_w, L_z$ to bilateral shifts. Moreover, $L^2(\T^2)=\Ll\oplus\h$.
\end{ex}

The proof of the following theorem is based on the above example.

\begin{thm}\label{maingp}
Restrictions $L_w|_\M, L_z|_\M$ are generalized powers for some invariant subspace $\M\subset L^2(\T^2)$ if and only if a unitary operator defining them as generalized powers is a unitary part of $L_z^{*n}L_w^m|_{\M_{0,0}}$ where $\M_{0,0}$ reduces $L_z^{*n}L_w^m$ to a bilateral shift of multiplicity one  and $m,n$ coincide with numbers in the periodic diagram $J=\bigcup_{k\in\mathbb{Z}}J_0+k(m,-n)$.

 Moreover, any such subspace $\M$ is determined by a periodic diagram and a Borel set $\gamma\subset \T$.

\end{thm}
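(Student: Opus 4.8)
The plan is to reduce the general case to the concrete construction carried out in Example~\ref{e1}, exploiting the structure theory of generalized powers together with the description of reducing subspaces of $L^2(\T)$. First I would take an invariant subspace $\M\subset L^2(\T^2)$ on which $L_w|_\M, L_z|_\M$ are generalized powers associated with a periodic diagram $J=\bigcup_{k\in\mathbb Z}J_0+k(m,-n)$. By Lemma~\ref{gplemma} there is a decomposition $\M=\bigoplus_{(i,j)\in J_0}\M_{i,j}$ such that $U:=L_z^{*n}L_w^m|_{\M}$ restricts to a unitary operator on each $\M_{i,j}$ with a star cyclic vector, and the operators $L_w^{i-i'}L_z^{j-j'}$ identify the summands unitarily. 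In particular, fixing $(0,0)\in J_0$ (after a translation we may assume $(0,0)\in J_0$), the operator $\mathcal U:=U|_{\M_{0,0}}$ is unitary with a star cyclic vector. The task is then to prove that $\M_{0,0}$ is a subspace of $L^2(\T^2)$ reducing $L_z^{*n}L_w^m=L_{w^m\bar z^n}$ to a bilateral shift of multiplicity one, and conversely that any such data yields an invariant subspace of the required form.

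The key computation is to understand the ambient action of $L_{w^m\bar z^n}$ on $L^2(\T^2)$. Exactly as in Example~\ref{e1} (which is the case $m=n=1$), I would decompose $L^2(\T^2)=\bigoplus_{\ell\in\mathbb Z}H_\ell$ where $H_\ell:=\bigvee\{w^kz^{\ell'}:\, nk+m\ell'=\ell\}$ --- more precisely, one groups monomials $w^iz^j$ by the value of an appropriate linear form so that $L_w$ and $L_z$ both shift the index and $L_{w^m\bar z^n}$ acts within each $H_\ell$. One checks that $L_{w^m\bar z^n}$ reduces each $H_\ell$ to a bilateral shift (of multiplicity equal to the number of lattice points on the relevant line, which by coprimality considerations on $m,n$ one can arrange to handle), and that $L_w,L_z$ map $H_\ell$ onto $H_{\ell+?}$ unitarily. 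Granting this, $\M_{0,0}$, being a subspace of some $H_\ell$ reducing $L_{w^m\bar z^n}$ and on which this operator is a unitary with a star cyclic vector, must reduce $L_{w^m\bar z^n}$ to a bilateral shift of multiplicity one. The description of reducing subspaces of a bilateral shift of multiplicity one (equivalently, Remark~\ref{cyclic} applied after identifying $H_\ell\simeq L^2(\T)$ via the unitary $L_{w^m\bar z^n}$) then shows $\M_{0,0}=\chi_\gamma L^2(\T)$ for a Borel set $\gamma\subset\T$, under this identification. Running the argument of Example~\ref{e1} in reverse, the subspaces $\M_{i,j}:=L_w^{i}L_z^{j}\,\M_{0,0}$ (with negative powers read as adjoints) reassemble, via $J$, into an invariant subspace $\M=\bigoplus_{(i,j)\in J_0}\bigoplus_{k\in\mathbb Z}\M_{i+km,j-kn}$ on which Lemma~\ref{gplemma} certifies that $L_w,L_z$ are generalized powers with defining unitary $\mathcal U=L_{w^m\bar z^n}|_{\M_{0,0}}$. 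This gives both implications, and the last sentence of the theorem follows since the pair $(J,\gamma)$ visibly determines $\M$ and conversely $\M$ determines $J$ (its diagram) and $\gamma$ (up to the usual null-set ambiguity) via $\M_{0,0}$.

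The main obstacle I anticipate is the bookkeeping around the choice of period and the lattice geometry: Remark~\ref{periodre} warns that the unitary $\mathcal U$ depends on the chosen period $J_0$ of $J$, so the statement must be read with a fixed period, and the decomposition $L^2(\T^2)=\bigoplus_\ell H_\ell$ must be set up so that $L_{w^m\bar z^n}$ really does preserve each $H_\ell$ while $L_w,L_z$ permute them transitively on the relevant index set --- this forces a careful choice of the linear form (essentially $(i,j)\mapsto ni+mj$ when $\gcd(m,n)=1$, with an extra congruence parameter when $\gcd(m,n)>1$) and a verification that $L_w^mL_z^{-n}$ acting on $H_\ell$ is a multiplicity-one bilateral shift rather than higher multiplicity. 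A second, more routine point is checking that the star-cyclicity of the vector for $\mathcal U$ is automatic once $\M_{0,0}$ is a multiplicity-one reducing subspace, which is precisely the content of Remark~\ref{cyclic} (``in fact for any proper subspace, there is a cyclic vector''), so that every reducing subspace of the multiplicity-one bilateral shift does arise. Once these geometric and cyclicity points are pinned down, the remainder is a direct transcription of Example~\ref{e1}.
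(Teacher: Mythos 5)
Your overall architecture (Lemma~\ref{gplemma} to extract the summand $\M_{0,0}$ and the unitary $\mathcal U=L_z^{*n}L_w^m|_{\M_{0,0}}$, then Remark~\ref{cyclic} and an Example~\ref{e1}-style reassembly for the converse) is close in spirit to the paper's converse construction, but your forward direction rests on a step that fails. You claim that $\M_{0,0}$ is ``a subspace of some $H_\ell$'', where $H_\ell$ is spanned by the monomials $w^iz^j$ with $ni+mj=\ell$. Nothing in the generalized-powers structure forces the summands of the Lemma~\ref{gplemma} decomposition to be homogeneous for this monomial grading. Already for the subspaces $\M=\psi\M_J$ of Theorem~\ref{diagramL} with $J$ periodic one has $\M_{0,0}=\bigoplus_k\Cplx\,\psi w^{km}z^{-kn}$, and for a generic unimodular $\psi$ (say $\psi=\chi_A+w\chi_{\T^2\setminus A}$) the Fourier support of $\psi$ meets infinitely many cosets of $\Z(m,-n)$, so $\M_{0,0}$ lies in no single $H_\ell$. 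Your conclusion (multiplicity one) is still correct, but it needs a different mechanism: either the paper's route --- extend $\mathcal U$ to a bilateral shift, note that the resulting generalized powers form a pair given by the diagram $J$ inside $L^2(\T^2)$, and use that only simple (multiplicity-one) pairs given by a diagram embed in $L^2(\T^2)$ --- or a spectral argument: $\mathcal U$ is a part of the unitary $L_{w^m\bar z^n}$, hence absolutely continuous, and an absolutely continuous unitary with a star cyclic vector is a unitary part of a bilateral shift of multiplicity one. Either way the grading by $H_\ell$ cannot carry this implication.

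The converse also needs more than ``run Example~\ref{e1} in reverse''. Your formula $\M=\bigoplus_{(i,j)\in J_0}\bigoplus_{k}\M_{i+km,j-kn}$ is incoherent: since $\M_{0,0}$ reduces $L_{w^m\bar z^n}$, the spaces $\M_{i+km,j-kn}=L_w^{i+km}L_z^{j-kn}\M_{0,0}$ coincide for all $k$, so the inner sum is not an orthogonal sum. More seriously, for an arbitrary reducing subspace $\M_{0,0}$ on which $L_{w^m\bar z^n}$ restricts to a multiplicity-one unitary, the translates $L_w^iL_z^j\M_{0,0}$ for $(i,j)\in J_0$ need not be pairwise orthogonal: for $m=n=2$ the difference $(1,-1)$ of two admissible indices satisfies $n\cdot 1+m\cdot(-1)=0$, and one can choose $\M_{0,0}$ inside the corresponding graded piece so that its $(1,-1)$-translate overlaps it. The paper avoids both problems by anchoring the construction in $\psi\M_J$ from Theorem~\ref{diagramL}, where the summands $\M_{i,j}=\bigoplus_k\Cplx\,\psi w^{i+km}z^{j-kn}$ are orthogonal by inspection, and then carving out $\Ll^\gamma_{i,j}$ by a reducing subspace $\chi_\gamma L^2(\T)$; you would need either to make that specific choice or to prove that $\M_{0,0}$ can always be taken wandering for the $J_0$-translates.
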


\begin{proof}
 Let $\M\subset L^2(\T^2)$ be an invariant subspace such that $L_w|_\M, L_z|_\M$ are generalized powers. Recall, that by the definition of generalized powers and their properties there are: space $\h$ such that $\M=\bigoplus_{(i,j)\in J_0}\M_{i,j}$ for $\M_{i,j}=\h$, a unitary operator $\mathcal{U}\in \B(\h)$ which extension onto the whole $\M$, denoted by $U$, commutes with $L_w|_\M, L_z|_\M$ and satisfy $L_w^m=UL_z^n$ on $\M$. In conclusion,  $\mathcal{U}$ is a part of $L_z^{*n}L_w^m$ which is a bilateral shift on the whole $L^2(\T^2)$ . On the other hand, the extension of $\mathcal{U}$ to a bilateral shift generates, by the Definition \ref{gpdef} generalized powers given by the same diagram as the pair $L_w|_\M, L_z|_\M$. Obviously the space of such an extension is a subspace of $L^2(\T^2)$. Recall that a pair of generalized powers defined by a bilateral shift and some diagram $J$ is also a pair given by the same diagram $J$.    However,  there may be only simple diagrams in $L^2(\T^2)$. It is an easy observation that if a pair of generalized powers defined by a bilateral shift is a pair given by a simple diagram then the bilateral shift is of multiplicity one. Thus we have showed that the only possible pairs of generalized powers may be of the form assumed in the theorem. Any unitary part of a bilateral shift of multiplicity one is determined by some Borel subset of a circle.

Let us show that a pair of generalized powers defined by any unitary part of a bilateral shift and any periodic diagram may be realized as $L_w|_\M, L_z|_\M$.
 We start with $L_w|_{\M}, L_z|_{\M}$ given by a periodic diagram $J$. By Theorem \ref{diagramL} $\M=\psi\M_J$. It was already recalled that $L_w|_{\M}, L_z|_{\M}$ is also a pair of generalized powers given by the same diagram and a bilateral shift. For the sake of completeness of the proof we show it using Lemma \ref{gplemma}. First note that $\M=\psi\M_J=\bigoplus_{(i,j)\in J}\mathbb{C}\psi w^iz^j$ and, by the periodicity of $J$ there are numbers $n,m$ such that the operator $U:=L_z^{*n}L_w^m$ is unitary on $\M$. Let $\M_{i,j}:=\bigoplus_{k\in\mathbb{Z}} \mathbb{C}\psi w^{i+km}z^{j-kn}$ for $(i,j)\in J_0$. Since $J=\bigcup_{k\in\mathbb{Z}}J_0+k(m,-n)$ it holds ${\M}=\bigoplus_{(i,j)\in J_0}\M_{i,j}$ which is the decomposition required in Lemma \ref{gplemma}. Obviously $L_z^{*n}L_w^m|_{\M_{i,j}}$ is a bilateral shift of multiplicity one for every $(i,j)\in J_0$ which fulfills the first condition in the mentioned lemma. Since  $L_z\M_{i,j}=\M_{i,j+1}$ for $(i,j)\in J_0$ and $L_w\M_{i,j}=\M_{i+1,j}$ for $(i,j)\in J_0$ where $i\ne m-1$ then $L_w^{i'-i}L_z^{j'-j}$ is a unitary operator between subspaces $\M_{i,j}$ and $\M_{i',j'}$ - the second condition of the lemma.

   Let $\gamma\subset \T$ be an arbitrary Borel set and then $\chi_\gamma L^2(\T)$ is an arbitrary subspace reducing a bilateral shift of multiplicity one. Let  $\Ll^\gamma_{0,j_0}\subset\M_{0,j_0}$ be such a subspace reducing $L_z^{*n}L_w^m|_{\M_{0,j_0}}$ for a chosen $j_0$. For arbitrary $(i,j)\in J_0$ let $\Ll^\gamma_{i,j}=L_w^iL_z^{j-j_0}\Ll^\gamma_{0,j_0}\subset\M_{i,j}$ - recall that $L_w^iL_z^{j-j_0}$ are unitary operators between $\M_{0,j_0}$ and $\M_{i,j}$. Thus $L_z\M_{i,j}=\M_{i,j+1}$ for any $(i,j)\in J_0$ and $L_w\M_{i,j}=\M_{i+1,j}$ for $(i,j)\in J_0, i\ne m-1$. Since $\Ll^\gamma_{i,j}$ reduces $U$  it holds $L_w\Ll^\gamma_{m-1,j}=L_w^m\Ll^\gamma_{0,j}=L_z^nU\Ll^\gamma_{0,j}=L_z^n\Ll^\gamma_{0,j}=\Ll^\gamma_{0,j+n}$. Thus $\Ll^\gamma:=\bigoplus_{(i,j)\in J_0}\Ll^\gamma_{i,j}$ is invariant under $L_w, L_z$. One can check that by the definition of subspaces $\Ll^\gamma_{i,j}$ the decomposition $\bigoplus_{(i,j)\in J_0}\Ll^\gamma_{i,j}$ fulfills the conditions of Lemma \ref{gplemma}. Thus $L_w|_{\Ll^\gamma}, L_z|_{\Ll^\gamma}$ is a pair of generalized powers defined by the same diagram $J$ and a unitary part of a bilateral shift $\mathcal{U}|_{\chi_{\gamma}L^2(\T)}$.
\end{proof}

 In the construction in Example \ref{e1}  we used a Borel subset $\gamma\subset\T$ to get a reducing subspace of $L^2(\T^2)$ which is equal $\chi_\Delta L^2(\T^2)$ for some Borel set $\Delta\subset\T^2$. Similarly, the unitary extension of a pair constructed in Theorem \ref{maingp} using the set $\gamma$ is equal to $\chi_\Delta L^2(\T^2)$. Let us investigate the connection between $\gamma$ and $\Delta$.
 \begin{prop}\label{gpprop}
 Let $\M$ be an invariant subspace where $L_w|_\M, L_z|_\M$ are generalized powers defined by a diagram $J=\bigcup_{k\in\mathbb{Z}}J_0+k(m,-n)$ and a Borel set $\gamma\subset \T$. Then the space of a unitary extension of $L_w|_\M, L_z|_\M$ is equal to $\chi_\Delta L^2(\T^2)$ where $\Delta=\omega^{-1}(\gamma)$ with $\omega:\T^2\ni(w,z)\to w^m\bar z^n\in \T$. \end{prop}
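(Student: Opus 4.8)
The plan is to combine the explicit description of $\M$ obtained in the proof of Theorem~\ref{maingp} with the observation that all the operators in play are multiplications on $L^2(\T^2)$, so that replacing the Borel set $\gamma\subset\T$ by $\Delta=\omega^{-1}(\gamma)$ becomes the functional-calculus identity for the bilateral shift $L_\omega:=L_z^{*n}L_w^m$, that is, the operator of multiplication by $\omega(w,z)=w^m\bar z^n$.

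First I would recall how $\M$ looks. By Theorems~\ref{maingp} and~\ref{diagramL}, $\M$ is built inside a diagram-type subspace $\M'=\psi\M_J$ with $\psi$ unimodular: periodicity $J=\bigcup_{k\in\Z}J_0+k(m,-n)$ makes $\M_J$, hence $\M'$, reduce $L_\omega$; one has $\M'=\bigoplus_{(i,j)\in J_0}\M_{i,j}$ with each cyclic summand $\M_{i,j}$ reducing $L_\omega$ to a bilateral shift of multiplicity one; and $\M=\bigoplus_{(i,j)\in J_0}\Ll^\gamma_{i,j}$, where $\Ll^\gamma_{0,j_0}$ is the reducing subspace of $L_\omega|_{\M_{0,j_0}}$ determined by $\gamma$ and $\Ll^\gamma_{i,j}=L_w^iL_z^{j-j_0}\Ll^\gamma_{0,j_0}$.

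The core step is the identification $\Ll^\gamma_{i,j}=\chi_\Delta\M_{i,j}$. By the spectral theorem applied to the multiplication operator $L_\omega$, for a bounded Borel function $f$ on $\T$ the operator $f(L_\omega)$ is multiplication by $(w,z)\mapsto f(\omega(w,z))$; taking $f=\chi_\gamma$ this is multiplication by $\chi_\Delta$ with $\Delta=\omega^{-1}(\gamma)$. Since $\M_{0,j_0}$ reduces $L_\omega$, the reducing subspace of $L_\omega|_{\M_{0,j_0}}$ determined by $\gamma$ is $\chi_\gamma(L_\omega)\M_{0,j_0}=\chi_\Delta\M_{0,j_0}$. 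As multiplication by $\chi_\Delta$ commutes with every multiplication operator, in particular with $L_w$, $L_z$, $L_w^iL_z^{j-j_0}$ and with the orthogonal projections onto the $\M_{i,j}$, one gets $\Ll^\gamma_{i,j}=\chi_\Delta L_w^iL_z^{j-j_0}\M_{0,j_0}=\chi_\Delta\M_{i,j}$ and, summing over $J_0$,
$$\M=\chi_\Delta\M'=\chi_\Delta\,\psi\,\M_J.$$
Now $\chi_\Delta L^2(\T^2)$ reduces $L_w$ and $L_z$ and contains $\M$, so the space of the minimal unitary extension of $(L_w|_\M,L_z|_\M)$ equals $\bigvee_{k,l\in\Z}L_w^kL_z^l\M$ (adjoint powers for negative exponents). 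Pulling the contractive multiplications by $\chi_\Delta$ and $\psi$ out of the closed span and using $\bigvee_{k,l\in\Z}L_w^kL_z^l\M_J=L^2(\T^2)$ --- a single monomial $w^iz^j$ with $(i,j)\in J$ already generates all monomials --- gives
$$\bigvee_{k,l\in\Z}L_w^kL_z^l\M=\chi_\Delta\,\psi\,L^2(\T^2)=\chi_\Delta L^2(\T^2),$$
the last equality since multiplication by $\psi$ is unitary and $\chi_\Delta L^2(\T^2)$ is closed. Minimality is immediate, as any reducing subspace containing $\M$ contains this span. This yields $\Delta=\omega^{-1}(\gamma)$.

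I expect the only delicate point to be the identification $\Ll^\gamma_{i,j}=\chi_{\omega^{-1}(\gamma)}\M_{i,j}$: one must check that the parametrization of the reducing subspaces of the bilateral shift $L_\omega|_{\M_{i,j}}$ by Borel subsets of $\T$ coincides with the one inherited from $L^2(\T^2)$ through $\omega$. This is exactly the identity $\chi_\gamma(L_\omega)=L_{\chi_\gamma\circ\omega}$ for multiplication operators together with the compatibility of the spectral calculus of $L_\omega$ with restriction to the reducing subspace $\M_{i,j}$. The remaining manipulations --- moving multiplications through closed spans and the computation $\bigvee_{k,l}L_w^kL_z^l\M_J=L^2(\T^2)$ --- are routine.
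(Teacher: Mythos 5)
Your argument is correct and takes essentially the same route as the paper: the heart of both proofs is that substituting $x=w^m\bar z^n$ into the representation of $\chi_\gamma$ identifies the reducing subspace of the bilateral shift $L_{\bar z^n w^m}|_{\h_{0,0}}$ determined by $\gamma$ with $\chi_{\omega^{-1}(\gamma)}\h_{0,0}$ (the paper does this via an explicit Fourier expansion, you via the functional-calculus identity $\chi_\gamma(L_\omega)=L_{\chi_\gamma\circ\omega}$, which are the same computation). The only divergence is the endgame --- you compute $\bigvee_{k,l}L_w^kL_z^l\M$ directly from $\M=\chi_\Delta\psi\M_J$, while the paper establishes $\Ll^\gamma\subset\chi_{\omega^{-1}(\gamma)}L^2(\T^2)$ and forces equality by running the same containment for $\T\setminus\gamma$ --- and both versions of this step are routine.
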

 \begin{proof}
   Let us start with a construction of a unitary extension. By the proof of Theorem \ref{maingp} if we extend a respective unitary operator to a bilateral shift, then we get a pair given by a diagram $J$ which by Theorem \ref{diagramL} is of the form $\psi\M_J$ for some unimodular function $\psi$. Let $\h_{i,j}=\Span{\psi w^{i+km}z^{j-kn}:k\in\mathbb{Z}}$ for $i=0,\dots,m-1, j\in\mathbb{Z}$. Since  $L_{\bar{z}^nw^m}|_{\h_{i,j}}$ is a bilateral shift of multiplicity one we can denote   a subspace reducing the bilateral shift $L_{\bar{z}^nw^m}|_{\h_{0,0}}$ equivalent to $\chi_\gamma L^2(\T)$ by $\Ll^\gamma_{0,0}$. Moreover, let $\Ll^\gamma_{i,j}:=L_w^iL_z^j\Ll^\gamma_{0,0}$ for $i=0,\dots,m-1, j\in\mathbb{Z}$. Note that $\h_{i,j}$ as well as $\Ll_{i,j}$ are pairwise orthogonal.
        Since $\Ll^\gamma_{i,j}=L_{w^m\bar{z}^n}\Ll^\gamma_{i,j}$, we have $L_w\Ll^\gamma_{m-1,j}=L_w^m\Ll^\gamma_{0,j}=L_z^nL_{\bar{z}}^nL_w^m\Ll^\gamma_{0,j}=L_z^nL_{\bar{z}^nw^m}\Ll^\gamma_{0,j}=L_z^n\Ll^\gamma_{0,j}=\Ll^\gamma_{0,j+n}$.
       The above equality and the definition of $\Ll^\gamma_{i,j}$ subspaces implies that $\Ll^\gamma:=\bigoplus\limits_{\substack{i=0,\dots,m-1\\j\in\mathbb{Z}}}\Ll_{i,j}$ is a reducing subspace under both $L_w, L_z$. Note that this is the same construction as in Theorem \ref{maingp} but for the set of indices $\{0,\dots,m-1\}\times\mathbb{Z}$ which properly contains $J_0$. Consequently $\M=\bigoplus_{(i,j)\in J_0}\Ll_{i,j}$ and $\Ll^\gamma$ is the space of the minimal unitary extension of $L_w|_\M, L_z|_\M$.

  Let $\chi_\gamma=\sum_{k\in\mathbb{Z}}\alpha_kx^k$ be the Fourier expansion. Recall, that it is supposed to be a reducing subspace of $L_{\bar{z}^nw^m}|_{\h_{0,0}}$. Thus $x\simeq \bar{z}^nw^m$ suggest to define a function in $L^2(\T^2)$ as $f(w,z):=\sum_{(k,l)\in\mathbb{Z}^2}\alpha_{k,l}w^kz^l$ where
   $$\alpha_{k,l}=\left\{
                      \begin{array}{ll}
                        \alpha_\frac{k}{m}, & \text{ for } \frac{k}{m}=-\frac{l}{n}\in\mathbb{Z} \\
                        0, & \text{ for remaining } (i,j)
                      \end{array}
                    \right..$$
 Since $f(w,z)=\chi_\gamma(\omega(w,z))$, we get $f(w,z)=\chi_{\omega^{-1}(\gamma)}$ and $\Ll^\gamma_{0,0}=P_{\chi_{\omega^{-1}(\gamma)}L^2(\T^2)}\h_{0,0}$. Moreover, $\Ll^\gamma_{i,j}:=L_w^iL_z^j\Ll^\gamma_{0,0}=L_w^iL_z^jP_{\chi_{\omega^{-1}(\gamma)}L^2(\T^2)}\h_{0,0}\subset \chi_{\omega^{-1}(\gamma)} L^2(\T^2)$ because $\chi_{\omega^{-1}(\gamma)} L^2(\T^2)$ is reducing under $L_w, L_z$. Thus $\Ll^\gamma\subset \chi_{\omega^{-1}(\gamma)} L^2(\T^2)$. On the other hand, $\h=\Ll^\gamma\oplus\Ll^{\T\setminus\gamma}$. Similar arguments for the set $\T\setminus\gamma$ leads to the conclusion $\h\ominus\Ll^\gamma=\Ll^{\T\setminus\gamma}\subset (1-\chi_{\omega^{-1}(\gamma)})L^2(\T^2)$. Eventually, $\Ll^\gamma=\chi_{\omega^{-1}(\gamma)} L^2(\T^2)$.

 Note that by Remark \ref{periodre}, the set $\gamma_i$ depends on the choice of a period. However, the choice of a period determines numbers $m,n$, and so the polynomial $\omega$. Thus for different $\gamma$ via different $\omega$ the set $\Delta$ is supposed to be the same.
\end{proof}
  Let us take a closer look to the set $\omega^{-1}(\gamma)$.
  \begin{rem}\label{gprem}
  Fix $\gamma_0\in\gamma$. Equation $w^m\bar z^n=\gamma_0$ for a fixed $z$ has $m$ solutions and for a fixed $w$ has $n$ solutions. Identify $(w,z)\in\T^2$ with $(\Arg(w),\Arg(z))\in[0,2\pi)^2$. Then $\Arg (w)=\frac{\Arg (\gamma_0)+n\Arg (z)+2k\pi}{m}$ and $\omega^{-1}(\gamma_0)$ is represented on $[0,2\pi)^2$ as sections inclined at an angle $\atan(\frac{n}{m})$. Consequently, the solution of $w^m\bar z^n=\gamma_0$ is a line winding on $\T^2$ finite times. In conclusion, the set $\omega^{-1}(\gamma)$ is a sum of stripes parallel to each other but never to $\Arg(w)$ or $\Arg(z)$ axis. The picture illustrate $\omega^{-1}(\gamma_0)$ for $m=5, n=3$:

 \begin{center}
  \begin{tikzpicture}
[yscale=0.7, xscale=0.7, auto]
\draw [black!50] (0,0) -- ++(7.5,0) -- ++(0,7.5) -- ++ (-7.5,0) -- ++ (0,-7.5);
\draw [black!100] (0,6) -- (0.9,7.5);\draw [black!25,dashed] (0,6) -- (7.5,6);\node at (-1.6,6) {\tiny$\frac13\Arg(\gamma_0)+\frac43\pi$};
\draw [black!100] (0,3.5) -- (2.4,7.5);\draw [black!25,dashed] (0,3.5) -- (7.5,3.5);\node at (-1.6,3.5) {\tiny$\frac13\Arg(\gamma_0)+\frac23\pi$};
\draw [black!100] (0,1) -- (3.9,7.5);\draw [black!25,dashed] (0,1) -- (7.5,1);\node at (-1.1,1) {\tiny$\frac13\Arg(\gamma_0)$};
\draw [black!100] (0.9,0) -- (5.4,7.5);\draw [black!25,dashed] (0.9,0) -- (0.9,7.5);
\node at (6.9,-3.1) {\begin{rotate}{90}\tiny$\frac15\Arg(\gamma_0)+\frac{8}{5}\pi$\end{rotate}};
\draw [black!100] (2.4,0) -- (6.9,7.5);\draw [black!25,dashed] (2.4,0) -- (2.4,7.5);
\node at (5.4,-3.1) {\begin{rotate}{90}\tiny$\frac15\Arg(\gamma_0)+\frac{6}{5}\pi$\end{rotate}};
\draw [black!100] (3.9,0) -- (7.5,6);\draw [black!25,dashed] (3.9,0) -- (3.9,7.5);
\node at (3.9,-3.1) {\begin{rotate}{90}\tiny$\frac15\Arg(\gamma_0)+\frac{4}{5}\pi$\end{rotate}};
\draw [black!100] (5.4,0) -- (7.5,3.5);\draw [black!25,dashed] (5.4,0) -- (5.4,7.5);
\node at (2.4,-3.1) {\begin{rotate}{90}\tiny$\frac15\Arg(\gamma_0)+\frac{2}{5}\pi$\end{rotate}};
\draw [black!100] (6.9,0) -- (7.5,1);\draw [black!25,dashed] (6.9,0) -- (6.9,7.5);
\node at (0.9,-2.1) {\begin{rotate}{90}\tiny$\frac15\Arg(\gamma_0)$\end{rotate}};
\end{tikzpicture}
\end{center}

 \end{rem}

 It is known that the class of compatible pairs extends the class of doubly commuting pairs. Let us point some relation of generalized powers with pairs consisting of a unitary operator and a unilateral shift. By Remark \ref{gprem}
 a unitary extension of a pair of generalized powers is $\chi_\Delta L^2(\T^2)$ where $\Delta$ can be described as stripes inclined at a nonzero angle to any of axes (any angle in $\atan(\mathbb{Q}_+)$). In the cases $\h_{us}, \h_{su}$ the set $\Delta= \delta\times\T, \Delta=\T\times \delta$ respectively which are sets inclined at zero angle to one of axes. Thus the cases $\h_{us}, \h_{su}$ appeared to be border cases of generalized powers. In Definition \ref{gpdef} numbers $m, n$ are assumed to be positive. However, if we let one of them to be zero, we get the following relation.
 \begin{cor}
 A unitary operator and a unilateral shift fulfills Definition \ref{gpdef} for $m=0, n=1$. A unilateral shift and a unitary operator fulfills Definition \ref{gpdef} for $m=1, n=0$.

  Indeed, by Theorem \ref{diagramL} a respective unitary operator is a part of a bilateral shift of multiplicity one. Thus, by Remark \ref{cyclic} it has a star cyclic vector.
  \end{cor}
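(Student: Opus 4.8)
The plan is to read Definition \ref{gpdef} in the limiting case where one of the two exponents equals $0$, and then to verify that the single non-trivial hypothesis of that definition — that the unitary building block has a star cyclic vector — holds automatically. The two assertions are interchanged by swapping the variables $w$ and $z$, so I would argue only the case of a commuting pair $(A,B)$ in which $A$ is a unilateral shift and $B$ a unitary operator that occurs as a part of $L_w,L_z$ on $L^2(\T^2)$. Letting one exponent be $0$ makes the translation vector $(m,-n)$ of the would-be periodic diagram move along a single coordinate axis; the formula $J=\bigcup_{k\in\Z}J_0+k(m,-n)$ then forces $J$ to be (a translate of) a simple diagram, here $\Z_+\times\Z$, the period $J_0$ degenerates to a single full row of $J$, and $H=\bigoplus_{(i,j)\in J_0}H_{i,j}$ becomes an orthogonal sum, indexed by that row, of copies of one building-block space $\mathcal H$. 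With this reading the vectors $e_{i,j}$ of Definition \ref{gpdef} remain unambiguous — the copy $H_{i_0,j_0}$ is pinned down by a single coordinate — the increment in the non-periodic direction is a unilateral shift, and the increment in the periodic direction acts on each copy as a fixed power of $\mathcal U$, hence is unitary. So the statement reduces to producing a cyclic unitary $\mathcal U$ whose amplification is $B$.

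To that end I would first put the pair into normal form. By Theorem \ref{mainusL} a pair $(A,B)$ as above is, up to unitary equivalence, $\big(L_w|_{\M},L_z|_{\M}\big)$ on $\M=\psi_w\h^2(\T)\otimes\chi_\delta L^2(\T)\subset L^2(\T^2)$ for a unimodular $\psi_w$ and a Borel set $\delta\subset\T$, so that $B\simeq I\otimes\big(L_z|_{\chi_\delta L^2(\T)}\big)$ is an amplification of $L_z|_{\chi_\delta L^2(\T)}$. By Helson's theorem, recalled in Remark \ref{cyclic}, the subspace $\chi_\delta L^2(\T)$ reduces $L_z$, hence $L_z|_{\chi_\delta L^2(\T)}$ is precisely a part of the multiplicity-one bilateral shift $L_z$ on $L^2(\T)$ (cf. Theorem \ref{diagramL}), and again by Remark \ref{cyclic} this part carries a star cyclic vector, namely $\chi_\delta\mathbf 1$.

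It then remains to feed $\mathcal H:=\chi_\delta L^2(\T)$, $\mathcal U:=L_z|_{\mathcal H}$ and $e:=\chi_\delta\mathbf 1$ into the degenerate construction of Definition \ref{gpdef} and to check that it returns $(A,B)$. Indexing the copies of $\mathcal H$ by the row of $J=\Z_+\times\Z$ that plays the role of the trivial period, one may take $e_{i,j}:=\psi_w w^i\otimes\chi_\delta z^j\mathbf 1$; then $V_1e_{i,j}=e_{i+1,j}$ and $V_2e_{i,j}=e_{i,j+1}$ are nothing but $L_w$ and $L_z$ applied to these vectors, which hold because $\chi_\delta L^2(\T)$ reduces $L_z$, so $(V_1,V_2)\simeq\big(L_w|_{\M},L_z|_{\M}\big)\simeq(A,B)$. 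Interchanging $w$ and $z$ yields the companion assertion. The step I expect to demand the most care is the first one: making precise how the periodic-diagram machinery of Definition \ref{gpdef} should be read once an exponent vanishes — the period is no longer a finite strip, the diagram becomes simple, and the associated pair becomes a unilateral shift together with a unitary operator — so that the definition remains meaningful; granting that, the corollary is the bare conjunction of the normal form (Theorems \ref{diagramL} and \ref{mainusL}) with Remark \ref{cyclic}.
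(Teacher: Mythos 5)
Your proposal is correct and is essentially the paper's own argument: the paper's entire justification is the two sentences inside the corollary, namely that the only nontrivial hypothesis of Definition \ref{gpdef} is the star cyclic vector, and that the relevant unitary operator, being a part of a multiplicity-one bilateral shift, has one by Remark \ref{cyclic}. Your normal form via Theorem \ref{mainusL}, the explicit vectors $e_{i,j}=\psi_w w^i\otimes\chi_\delta z^j\mathbf 1$, and the care about reading the degenerate diagram are just an expanded verification of that same two-line reasoning.
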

  Let us now formulate the final result.
\begin{thm}\label{mainL}
Let $\M\not=\{0\}$ be an invariant subspace of $L^2(\T^2)$. If the pair $(L_w|_{\M},L_z|_{\M})$ is compatible then $\M$ has one of the following forms:
 \begin{enumerate}
 \item $$\M=\chi_\Theta L^2(\T^2)\oplus\left(\chi_\delta L^2(\T) \otimes \psi_z\h^2(\T)\right)$$
were $\delta \subset\T,\;\Theta\subset\T^2\setminus(\delta\times\T)$ are Borel sets, $\psi_z\in L^2(\T)$ is a unimodular function of variable $z$,
\item $$\M=\chi_\Theta L^2(\T^2)\oplus\left(\psi_w\h^2(\T)\otimes \chi_\delta L^2(\T)\right) $$
were $\delta \subset\T,\;\Theta\subset\T^2\setminus(\T\times\delta)$ are Borel sets, $\psi_w\in L^2(\T)$ is a unimodular function of variable $w$,
\item $$\M=\psi \M_J ,$$
where $\psi\in L^2(\T^2)$ is a unimodular function, $J\subset \Z^2$ is a diagram and $\M_J=\{w^iz^j:(i,j)\in J\}$,
\item $$\M=\chi_\Theta L^2(\T^2) \oplus \bigoplus_i\Ll^{\gamma_i}_{J^i},$$
where there are positive integers $m, n$ such that  $L_w|_{\Ll^{\gamma_i}_{J^i}}, L_z|_{\Ll^{\gamma_i}_{J^i}}$ are generalized powers defined by a periodic diagram $J^i=\bigcup_{k\in\mathbb{Z}}J_0^i+kl_i(m,-n)$ and a unitary part of a bilateral shift $\chi_{\gamma_i}L^2(\T)$ where $\gamma_i\subset \T$ is a Borel set for each $i$. Moreover, $\omega_i^{-1}\gamma_i$ are pairwise almost disjoint and $\Theta \subset \T^2\setminus \bigcup_i\omega_i^{-1}(\gamma_i)$ where $\omega_i:\T^2\ni(w,z)\to \left(\bar{z}^{n}w^{m}\right)^{l_i}\in\T$.
\end{enumerate}
\end{thm}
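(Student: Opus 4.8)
The plan is to run the compatible pair $(L_w|_\M,L_z|_\M)$ through the Wold-type decomposition of Theorem \ref{decomposition_c_cnc}, to recognise each summand via the theorems already established in this section, and then to exploit the fact that the minimal unitary extension of each summand sits inside $L^2(\T^2)$ as some $\chi_A L^2(\T^2)$: a measure-disjointness argument will then show that at most one genuinely ``shift-type'' summand can be present.

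So first I would apply Theorem \ref{decomposition_c_cnc}; compatibility forces the completely non compatible part to be $\{0\}$, hence $\M=\M_{uu}\oplus\M_{us}\oplus\M_{su}\oplus\M_{Hardy}\oplus\M_{d}\oplus\M_{gp}$, each summand being invariant (in fact reducing inside $\M$), so invariant in $L^2(\T^2)$. For each summand put $\K_\bullet:=\bigvee_{k,l\in\Z}L_w^kL_z^l\M_\bullet$; this is a reducing subspace of $L^2(\T^2)$, hence $\K_\bullet=\chi_{A_\bullet}L^2(\T^2)$ for a Borel set $A_\bullet$ (\cite{GM}, Lemma 3), and it is the minimal unitary extension of the corresponding restriction, so by iterating Remark \ref{KbotK} the $\K_\bullet$ are pairwise orthogonal, i.e. the $A_\bullet$ are pairwise almost disjoint. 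Next I would identify the summands. Since $L_w|_{\M_{uu}}, L_z|_{\M_{uu}}$ are unitary, $\M_{uu}$ reduces $L_w,L_z$ in $L^2(\T^2)$, so $\M_{uu}=\K_{uu}=\chi_\Theta L^2(\T^2)$. By Theorem \ref{mainusL} and its mirror under $w\leftrightarrow z$, $\M_{su}=\psi_w\h^2(\T)\otimes\chi_{\delta}L^2(\T)$ with $\K_{su}=\chi_{\T\times\delta}L^2(\T^2)$, and $\M_{us}=\chi_{\delta'}L^2(\T)\otimes\psi_z\h^2(\T)$ with $\K_{us}=\chi_{\delta'\times\T}L^2(\T^2)$. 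By the remark after Definition \ref{defdiagram} every diagram part inside $L^2(\T^2)$ is a simple (multiplicity-one) pair, so Theorem \ref{diagramL} applies to $\M_{Hardy}$ (a $\Z_+^2$-diagram) and to each summand of $\M_{d}$ (an irregular diagram), writing it as $\psi\M_J$, whose minimal unitary extension is $\psi L^2(\T^2)=L^2(\T^2)$. Finally, Theorem \ref{maingp} and Proposition \ref{gpprop} give $\M_{gp}=\bigoplus_i\Ll^{\gamma_i}_{J^i}$ with $\K_{gp}=\chi_\Delta L^2(\T^2)$, $\Delta=\bigcup_i\omega_i^{-1}(\gamma_i)$ a union of ``stripes'' of rational, nonzero slope (Remark \ref{gprem}).

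The decisive step — and, I expect, the main obstacle — is to turn the almost-disjointness of the $A_\bullet$ into the conclusion that exactly one shift-type summand occurs, and to extract the side conditions of forms (1)--(4). If $\M_{Hardy}\oplus\M_{d}\neq\{0\}$, then its unitary extension already equals $L^2(\T^2)$, so every other $\K_\bullet$ vanishes and $\M$ is a single simple diagram pair $\psi\M_J$, i.e. form (3). Otherwise $\M=\chi_\Theta L^2(\T^2)\oplus\M_{us}\oplus\M_{su}\oplus\M_{gp}$, and I would rule out two of the last three being simultaneously nonzero: $(\delta'\times\T)\cap(\T\times\delta)$ has positive measure once $|\delta'|,|\delta|>0$; and $(\delta'\times\T)\cap\omega^{-1}(\gamma)$ has measure $|\delta'|\,|\gamma|$, because both $(w,z)\mapsto w^m\bar z^n$ and, for a fixed coordinate, $z\mapsto\bar z^n$ push normalized Lebesgue measure forward to normalized Lebesgue measure; the same pushforward property shows that two stripes of different slopes meet in positive measure, forcing all the diagrams $J^i$ occurring in $\M_{gp}$ to share one fundamental direction $(m,-n)$ and hence a common $\omega(w,z)=\bar z^n w^m$ up to the multipliers $l_i$. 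Therefore at most one of $\M_{us},\M_{su},\M_{gp}$ is nonzero (all may vanish, leaving the pure reducing case $\chi_\Theta L^2(\T^2)$, which is subsumed under each of (1),(2),(4)), and the residual orthogonality of $\K_{uu}$ with the surviving extension — and, in the generalized-powers case, of the extensions of the individual $\Ll^{\gamma_i}_{J^i}$ with one another — yields exactly $\Theta\subset\T^2\setminus(\delta'\times\T)$ in form (1), $\Theta\subset\T^2\setminus(\T\times\delta)$ in form (2), and $\Theta\subset\T^2\setminus\bigcup_i\omega_i^{-1}(\gamma_i)$ with the $\omega_i^{-1}(\gamma_i)$ pairwise almost disjoint in form (4). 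Apart from these pushforward/Fubini facts about the tilted stripes, every step is a direct citation of the section's earlier results.
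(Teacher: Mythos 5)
Your proposal is correct and follows essentially the same route as the paper's proof: decompose via Theorem \ref{decomposition_c_cnc}, kill the completely non compatible part by compatibility, use Remark \ref{KbotK} to make the supports of the minimal unitary extensions pairwise almost disjoint, identify each summand by Theorems \ref{diagramL}, \ref{mainusL}, \ref{maingp} and Proposition \ref{gpprop}, and then use the geometric description of those supports (full torus for diagram parts, horizontal/vertical bands for $\M_{us},\M_{su}$, tilted stripes for generalized powers) to force at most one shift-type summand and a common slope $(m,-n)$ for the generalized-powers pieces. Your explicit Fubini/pushforward computations merely spell out the disjointness claim the paper states more tersely.
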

Note that subspaces where the operators are a pair of doubly commuting unilateral shifts is the case $J=\mathbb{Z}_+^2$ in \textit{(3)}.
\begin{proof}
By Theorem \ref{decomposition_c_cnc}
  $\M_{uu}\oplus\M_{us}\oplus\M_{su}\oplus\M_{Hardy}\oplus \M_{di} \oplus \M_{gp} \oplus\M_{cnc}$, where $\M_{di}=\M_{Hardy}\oplus\M_d$ and by compatibility $\M_{cnc}=\{0\}$. Denote $\M_{uu}=\chi_\Theta L^2(\T^2)$ and the spaces of minimal unitary extensions of respective restrictions by $\chi_{\Delta_\iota} L^2(\T^2)$ for $\iota=us, su, di, gp$. By Remark \ref{KbotK} sets $\Theta,\Delta_{us},\Delta_{su},\Delta_{di},\Delta_{gp}$ are pairwise almost disjoint.

  If $\M_{di}\ne\{0\}$ then by Theorem \ref{diagramL} $\M=\M_{di}=\psi \M_J$ and $\Delta_{di}=\T^2$. Thus $\M_{uu}=\M_{us}=\M_{su}=\M_{gp}=\{0\}$.

By the descriptions of $\Delta_{us}, \Delta_{su}$ in Theorem \ref{mainusL} and $\Delta_{gp}$ (precisely parts of $\Delta_{gp}$) in Remark \ref{gprem} sets $\Delta_{us},\Delta_{su},\Delta_{gp}$ are disjoint only if at most one of them is of positive measure. Hence, there left three possibilities: $\M=\M_{uu}\oplus\M_{us},\;\M=\M_{uu}\oplus\M_{su},\;\M=\M_{uu}\oplus\M_{gp}$.

  The cases $\M=\M_{uu}\oplus\M_{us},\;\M=\M_{uu}\oplus\M_{su}$ follow from Theorem \ref{mainusL}.

  The last case is $\M=\M_{uu}\oplus\M_{gp}$. By Theorem \ref{decomposition_c_cnc}, the restrictions to the space $\M_{gp}$ can be decomposed into generalized powers. So, $\M_{gp}=\bigoplus_i\Ll^{\gamma_i}_{J_i}$ where the restrictions to each $\Ll^{\gamma_i}_{J^i}$ are generalized powers. Denote by $\chi_{\Delta_i}L^2(\T^2)$ the spaces of their minimal unitary extensions. By Remark \ref{KbotK} $\Delta_i$ are pairwise disjoint. On the other hand, by the description of $\Delta_i$ following from Remark \ref{gprem} it is possible only when $\frac{n_i}{m_i}$ equals for all $i$. Thus, there are positive integers $m,n$ and a sequence of positive integers $l_i$ such that $J^i=\bigcup_{k\in\mathbb{Z}}J_0^i+kl_i(m,-n)$.  By Theorem \ref{maingp} a unitary operator defining $\Ll^{\gamma_i}_{J^i}$ is given by some Borel set $\gamma_i\subset \T$. By Proposition \ref{gpprop} we have $\Delta_i=\omega^{-1}(\gamma_i)$.  Thus $\Theta \subset \T^2\setminus \bigcup_i\Delta_i=\T^2\setminus\bigcup_i\omega_i^{-1}(\gamma_i)$.
  \end{proof}
It is difficult to compare directly sets $\gamma_i$ related to the decomposition of $\M_{gp}$. Indeed, they all describe unitary parts of bilateral shifts of multiplicity one, but  of different bilateral shifts. However, it turns out, that if the sequence $\{l_i\}$ is bounded then all spaces $\Ll^{\gamma_i}_{J^i}$ can be described by common numbers $m,n$. Indeed, then we can find a sequence $\{l_i'\}$ such that $l_il_i'=l$ for some $l$ and all $i$. Then, by Remark \ref{periodre} we can describe $\Ll^{\gamma_i}_{J^i}$ by diagrams $J^i=\bigcup_{k\in\mathbb{Z}}J^i_0+k(lm,-ln)$. Since functions $\omega_i$ depends only on $m_i,n_i$ which were changed to a common pair $lm,ln$ then all $\omega_i$ are equal. Consequently the sets $\Delta_i$ are disjoint if the new sets $\gamma'_i$ are disjoint.

\section{Acknowledgment}
The third author is indebted to Michio Seto for a helpful discussion.


\begin{thebibliography}{17}

\bibitem{BCL} C. A. Berger, L. A. Coburn, A. Lebow, \emph{Representation and Index Theory for $C^*$-Algebras Generated by Commuting Isometries}, Journal of Functional Analysis \textbf{27} (1978), 51--99.

\bibitem{Beur} A. Beurling, \emph{On two problems concerning linear transformations in Hilbert
spaces}, Acta. Math. \textbf{81} (1949), 239--255.

\bibitem{Comp} Z. Burdak, M. Kosiek, M. S\l oci\'nski, \emph{Compatible pairs of commuting isometries}, Linear Algebra Appl. \textbf{479} (2015), 216--259.

\bibitem{BKPSLAA} Z. Burdak, M. Kosiek, P. Pagacz, M. S\l oci\' nski, \emph{On the commuting isometries}, Linear Algebra Appl. \textbf{516} (2017), 167--185.

\bibitem{GS_decom} D. Gaspar, N. Suciu, \emph{Intertwining properties of isometric semigroups and Wold type decompositions}, Operator Theory: Adv. and Appl. \textbf{24} (1987), 183--193.

\bibitem{GS_inv} D. Gaspar, N. Suciu, \emph{On invariant subspaces on bitorus}, J. Operator Theory \textbf{30} (1993), 227--241.

\bibitem{GM} P. Ghatage, V. Mandrekar, \emph{On Beurling type invariant subspaces of $L^2(\T^2)$ and their equivalence}, J. Operator Theory \textbf{20} (1988), 83--89.
\bibitem{Helson} H. Helson, \emph{Lectures on invariant subspaces}, Academic Press, New york, 1964.

\bibitem{Mucomp} K. Hor\' ak, V. M\"uller, \emph{On the structure of commuting isometries}, Comment. Math. Univ. Carolin. \textbf{28} (1987), 165--171.

\bibitem{Mucomp2} K. Hor\' ak, V. M\"uller, \emph{Functional model for commuting isometries}, Czechoslovak Math. J. \textbf{39} (1989), 370--379.

\bibitem{Seto2} K. Izuchi, T. Nakazi, M. Seto, \emph{Backward shift invariant subspaces in the bidisc. II}, J. Operator Theory \textbf{51} (2004), 361--376.

\bibitem{Man} V. Mandrekar \emph{The validity of beurling theorems in polidiscs}, Proc. Amer. Math. Soc. \textbf{103} (1988), 145--148.

\bibitem{NF} B. Sz.-Nagy, C. Foia\c{s}, \emph{Harmonic Analysis of operators on Hilbert space}, North-holland publishing company, Amsterdam, London, 1970.


\bibitem{Pop} D. Popovici, \emph{On the structure of c.n.u. bi-isometries. II}, Acta Sci. Math. (Szeged), \textbf{68} (2002), 329--347.

\bibitem{RR}  H. Radjavi, P. Rosenthal, \emph{Invariant Subspaces}, Springer-Verlag Berlin Heidelberg, New York, (1973).

\bibitem{Sarkar} J. Sarkar, \emph{Wold decomposition for doubly commuting isometries}, Linear Algebra Appl. \textbf{445} (2014), 289--301.

\bibitem{SSW} J. Sarkar, A. Sasane, B. Witt, \emph{Doubly commuting submodules of the Hardy module over polydiscs}, Studia Math. \textbf{2791} (2013), 179--192.

\bibitem{Sl} M. S\l oci\' nski  \emph{On the Wold type decomposition of a pair of commuting isometries}, Ann. Polon. Math. \textbf{37} (1980), 255--262.


\bibitem{W} H. Wold, \emph{A study in the analysis of stationary time series}, Uppsala 1954.

\end{thebibliography}
\end{document}